\newcommand{\R}        {\mathbb {R}}
\newcommand{\eps}      {\epsilon}
\newcommand{\lap}      {\bigtriangleup}
\newcommand{\grad}     {\nabla}
\newcommand{\noi}      {\noindent}
\newcommand{\del}      {\partial}
\newtheorem{theorem}{Theorem}[section]
\newtheorem{proposition}{Proposition}[section]
\newtheorem{lemma}{Lemma}[section]
\newtheorem{corollary}{Corollary}[section]
\theoremstyle{definition} 
\newtheorem*{maintheorem*}{Main Theorem}
\numberwithin{equation}{section}
\numberwithin{figure}{section}
\numberwithin{table}{section}
\newcounter{asnr}
\ifnum\value{asnr}=0 \stepcounter{asnr} 
\title[]{On some Variational Problems set on domains tending to infinity}
\author[M. Chipot]{M.\ Chipot} \address[Michel Chipot]{\newline Institute f\"ur Mathematik,\ Universit\"at Z\"urich, \newline
Winterthurerstr. 190,\ CH-8057 Z\"urich,
Switzerland.}
\email[]{m.m.chipot@math.uzh.ch}
\author[A. Mojsic]{A. Mojsic} \address[Aleksandar Mojsic]{\newline Helmut Schmidt University / University of the Federal Armed Forces Hamburg, \newline
Department of Mechanical Engineering,\newline
Holstenhofweg 85\newline
22043 Hamburg, Germany.}
\email[]{mojsica@hsu-hh.de}
\author[P. Roy]{P. Roy} \address[Prosenjit Roy]{\newline Tata Institute of Fundamental Research \newline
Sharadanagar,\ GKVK Campus,
Postbox - 560065,\newline
Bangalore, India .}
\email[]{prosenjit@math.tifrbng.res.in}
\keywords{Cylinders, Variational Problem, Asymptotic analysis.}
\date{\today}
\begin{document}
\maketitle

ABSTRACT: \ Let $\Omega_\ell = \ell\omega_1 \times \omega_2$ where   $\omega_1 \subset \R^p$ and $\omega_2 \subset \R^{n-p}$ are assumed to be open and 
bounded. We consider the following minimization problem:
$$E_{\Omega_\ell}(u_\ell) = \min_{u\in W_0^{1,q}(\Omega_\ell)}E_{\Omega_\ell}(u)$$ 
where $E_{\Omega_\ell}(u) = \int_{\Omega_\ell}F(\grad u)-fu$, $F$ is a convex  function and $f\in L^{q'}(\omega_2)$. We are 
interested in  studying the asymptotic behavior of the solution $u_\ell$ as  $\ell$  tends to infinity.

\section{Introduction}
For $ 1 \leq p \leq n-1$ an integer, let $\Omega_\ell = \ell\omega_1 \times \omega_2 \subset \R^n$  where  $\omega_1 \subset \R^p$ and $ \omega_2 \subset \R^{n-p}$ are open and bounded. $\omega_1$ is also assumed to be star shaped with respect to the origin.  We will refer to $\Omega_\ell$ as a cylindrical domain. Points in
$\Omega_\ell$ will be denoted by $X = (X_1,X_2)$ where  $X_1 = (x_{1} , . . . , x_p) \in \ell\omega_1$ and $X_2 = (x_{p+1} , . . . , x_n) \in \omega_2$. Let $W^{-1,q'}(\omega)$
denote the dual space of the usual (cf \cite{i}) Sobolev space $W_0^{1,q}(\omega)$.
 For $q >1 $, let  $$f\in L^{q'}(\omega_2),$$ 
where 
$\frac{1}{q} +\frac{1}{q'} =1.$
\smallskip

\textit{Definition (Uniform convexity of power $q$-type)}: \  We say that a function $G :\R^n \rightarrow \R$ is a ``uniformly convex function of power $q$-type" 
if there exists a constant $\alpha= \alpha(q)$ such that 
$\forall  \xi, \eta \in \R^n$ 
\begin{equation}
\label{alpha}
2G(\frac{\xi + \eta}{2}) + \alpha |\xi -\eta|^q \leq G(\xi) + G(\eta).
\end{equation}
There is a large amount of  literature available on the study of such class of functions. We refer to \cite{bo} and the references there.  For $q \geq 2$, the function $G(x) = |x|^q$ belongs to such a class.
If $1 < q < 2$ then there does not exist any function satisfying \eqref{alpha}. This is because
for finite convex functions  it is known \cite{ff} that  its second order derivative  exists almost everywhere, which is  contradictory in this case.

Let $F :\R^n \rightarrow \R$ be a  ``uniformly convex function of power $q$-type", satisfying the following  growth condition: 
\begin{equation}
\label{quadratic}
\lambda |\xi |^q \leq F(\xi) \leq \Lambda |\xi|^q, \hspace*{3mm} \forall \xi \in \R^n,
\end{equation}
for some $\lambda, \Lambda > 0$.\smallskip

We consider the following minimization problem:
\begin{equation}\label{equaation}
E_{\Omega_\ell}(u_\ell) = \min_{u \in W_0^{1,q}(\Omega_\ell)}E_{\Omega_\ell}(u)
\end{equation}
where  
\begin{equation}\label{func}
E_{\Omega_\ell}(u) := \int_{\Omega_\ell}F(\grad u)-fu.
\end{equation}  
We refer to \cite{te} for the proof of existence and uniqueness of such $u_\ell$. In this article we are mainly interested  in studying the asymptotic 
behavior of $u_\ell$ as  $\ell$  tends to infinity. We consider the following minimization problem defined on the cross
section $\omega_2$ of $\Omega_\ell:$ 
\begin{equation}\label{equaation1}
E_{\omega_2}(u_\infty) = \min_{u \in W_0^{1,q}(\omega_2)}E_{\omega_2}(u)
\end{equation}
where  $\forall  u \in W_0^{1,q}(\omega_2),$
$$E_{\omega_2}(u) := \int_{\omega_2}F(0,\grad_{X_2} u)-fu.$$  \smallskip

In \cite{karen}, the authors considered the same problem for the particular case of  $$F(\xi) = A\xi\cdot\xi,$$ where 
 $$A :=\begin{pmatrix}A_{11}  & A_{12}\\
A_{12}^t & A_{22}
  \end{pmatrix}
$$ is $n \times n$ positive definite matrix and $``\cdot"$ denotes usual Euclidean scalar product. $A_{11}, A_{12}$ and
$A_{22}$ are respectively $p \times p, p\times (n-p)$ and $(n-p)\times (n-p)$ matrices  with bounded coefficients. $A_{12}^t$ denotes the transpose of the matrix $A_{12}$. 
It is easy to see that, in this case  $F$ satisfies \eqref{quadratic} with $q=2$. This paper can be considered as the principal incentive for our current work. 
In the case above the unique minimizer $u_\ell$ additionally satisfies the following Euler-Lagrange equation:
\begin{equation}
\label{eq:4}
\left\{
\begin{aligned}
  & - \textrm{div} \left( A\grad u_\ell\right) = f \quad \text{ in }\Omega_\ell,\\
    &u_\ell = 0 \quad \text{ on }\del\Omega_\ell,
     \end{aligned}
\right.
\end{equation}
where  ${\rm div}$ and $\grad $ denotes the divergence operator and the gradient in the $X$ variable. We recall their main result.
 Let ${\rm div_{X_2}}$ and $\grad_{X_2}$ denotes the divergence operator and the gradient in the $X_2$ variable. For $x>0$, throughout this paper $[x]$ will denote the greatest integer less than or equal to $x$.

\begin{theorem}\label{jjh}
\emph{[Chipot-Yeressian]}
There exists some constants $A, B > 0$, such that  
\begin{equation}
\label{chk}
\int_{\Omega_{\frac{\ell}{2}}} |\grad (u_\ell - u_\infty)|^2 \leq A e^{-B\ell}
\end{equation}
where $u_\infty$ is the solution to the problem
\begin{equation*}
\label{eq:5}
\left\{
\begin{aligned}
  & -{\rm div_{X_2}} \left( A_{22}\grad_{X_2} u_\infty \right) = f \quad \text{ in } \omega_2,\\
    &u_\infty = 0 \quad \text{ on }\del\omega_2.
     \end{aligned}
\right.
\end{equation*}
\end{theorem}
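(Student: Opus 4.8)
The plan is to recast \eqref{chk} as a Saint--Venant (Phragm\'en--Lindel\"of) type exponential decay estimate for the difference $w:=u_\ell-u_\infty$, where $u_\infty$ is regarded on $\Omega_\ell$ by declaring it independent of $X_1$. The preliminary observation is that \emph{both} $u_\ell$ and this extension of $u_\infty$ solve $-\Div(A\grad\,\cdot\,)=f$ in $\Omega_\ell$ weakly against $W_0^{1,2}(\Omega_\ell)$. For $u_\ell$ this is exactly the Euler--Lagrange equation \eqref{eq:4}. For $u_\infty$ the only nontrivial point is that the mixed term $\int_{\Omega_\ell}A_{12}\grad_{X_2}u_\infty\cdot\grad_{X_1}v$ vanishes for every $v\in W_0^{1,2}(\Omega_\ell)$: since $\grad_{X_2}u_\infty$ depends on $X_2$ only and, for a.e.\ $X_2$, the slice $v(\cdot,X_2)$ lies in $W_0^{1,2}(\ell\omega_1)$, we have $\int_{\ell\omega_1}\grad_{X_1}v(\cdot,X_2)\,dX_1=0$; the remaining term $\int_{\Omega_\ell}A_{22}\grad_{X_2}u_\infty\cdot\grad_{X_2}v$ equals $\int_{\Omega_\ell}fv$ because for a.e.\ $X_1$ its inner $X_2$-integral is the weak form of the equation satisfied by $u_\infty$ on $\omega_2$. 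Subtracting the two identities gives
\begin{equation}\label{eq:wharm}
\int_{\Omega_\ell}A\grad w\cdot\grad v=0\qquad\text{for all }v\in W_0^{1,2}(\Omega_\ell),
\end{equation}
while $w=0$ on $\overline{\ell\omega_1}\times\del\omega_2$ and $w=-u_\infty$ on $\del(\ell\omega_1)\times\overline{\omega_2}$, so the ``data'' of $w$ sit entirely on the two far ends of the cylinder. (A crude bound on the mixed term is useless here, since its natural size grows with $\ell$; its \emph{exact} vanishing is what drives the argument.)

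Next set $E(t):=\int_{\Omega_t}A\grad w\cdot\grad w$ for $0<t\leq\ell$. Using that $\omega_1$ is bounded and star-shaped with respect to the origin (so that the dilates $t\omega_1$ are nested), construct for $0<t\leq\ell-1$ a Lipschitz cut-off $\rho_t=\rho_t(X_1)$ with $\rho_t\equiv1$ on $t\omega_1$, $\rho_t\equiv0$ outside $(t+1)\omega_1$, and $|\grad\rho_t|$ bounded by a constant depending only on $\omega_1$ — e.g.\ $\rho_t(X_1)=\min\bigl(1,\max(0,\,t+1-\mu(X_1))\bigr)$ with $\mu$ the Minkowski functional of $\omega_1$. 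Since $\rho_t w\in W_0^{1,2}(\Omega_\ell)$ ($\rho_t$ has compact support in $\ell\omega_1$, and $w$ vanishes on the rest of $\del\Omega_\ell$), testing \eqref{eq:wharm} with $v=\rho_t w$ and expanding $\grad(\rho_t w)=\rho_t\grad w+w\grad\rho_t$ yields
\begin{equation*}
E(t)\;\leq\;\int_{\Omega_\ell}\rho_t\,A\grad w\cdot\grad w\;=\;-\int_{\Omega_{t+1}\setminus\Omega_t}w\,(A\grad w)\cdot\grad\rho_t .
\end{equation*}
Bounding the right-hand side by Cauchy--Schwarz, the uniform bound on $\grad\rho_t$ and the boundedness of $A$, then using the Poincar\'e inequality on $\omega_2$ applied slice-wise (legitimate since $w=0$ on $\overline{\ell\omega_1}\times\del\omega_2$) together with the coercivity $\lambda|\xi|^2\leq A\xi\cdot\xi$ from \eqref{quadratic}, one gets a constant $K>0$, \emph{independent of $t$ and $\ell$}, with $E(t)\leq K\bigl(E(t+1)-E(t)\bigr)$, hence $E(t)\leq\theta\,E(t+1)$ with $\theta:=K/(1+K)\in(0,1)$.

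Iterating this in unit steps from $t=\ell/2$ gives $E(\ell/2)\leq\theta^{[\ell/2]}E(\ell/2+[\ell/2])\leq\theta^{[\ell/2]}E(\ell)$. It remains to control $E(\ell)$ polynomially in $\ell$: comparing $E_{\Omega_\ell}(u_\ell)$ with $E_{\Omega_\ell}(0)=0$ and using \eqref{quadratic} and the Poincar\'e inequality in $X_2$ (with constant depending only on $\omega_2$) gives $\|\grad u_\ell\|_{L^2(\Omega_\ell)}\leq C\|f\|_{L^2(\Omega_\ell)}$; since $f$ and $\grad_{X_2}u_\infty$ depend on $X_2$ only, $\|f\|_{L^2(\Omega_\ell)}^2+\|\grad u_\infty\|_{L^2(\Omega_\ell)}^2\leq C|\ell\omega_1|=C'\ell^{p}$, whence $E(\ell)\leq\Lambda\|\grad w\|_{L^2(\Omega_\ell)}^2\leq C''\ell^{p}$. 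Combining, $E(\ell/2)\leq C''\ell^{p}\theta^{[\ell/2]}\leq Ae^{-B\ell}$ for suitable $A,B>0$, and \eqref{chk} follows from $\lambda\int_{\Omega_{\ell/2}}|\grad(u_\ell-u_\infty)|^2\leq E(\ell/2)$. The main obstacle I expect is the first step — making the slice-wise Sobolev arguments on the product domain $\Omega_\ell$ rigorous, since that is exactly what upgrades $w$ to a solution of the homogeneous equation \eqref{eq:wharm}; after that the scheme is the standard cylinder-energy iteration, the only care being the $\ell$-independence of the constants in the cut-off and in the difference inequality.
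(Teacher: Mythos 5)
Your argument is correct, but it is worth being clear about what it is being compared to: the paper does not prove Theorem \ref{jjh} at all -- it is quoted from \cite{karen}, with the method summarized as ``a suitable choice of test function in the weak formulation of \eqref{eq:4} and an iteration technique'' -- and your proposal is essentially that original argument: you upgrade the $X_1$-independent extension of $u_\infty$ to a weak solution of \eqref{eq:4} in $\Omega_\ell$ (the slice-wise vanishing of $\int_{\ell\omega_1}\grad_{X_1}v$ is the right justification, and it does require $A_{12}$, $A_{22}$ and $f$ to be independent of $X_1$, which is implicit in the constant-matrix setting of Theorem \ref{jjh}), then run a Saint--Venant/Caccioppoli iteration $E(t)\leq\theta E(t+1)$ with a cut-off in $X_1$ and close with the crude bound $E(\ell)\leq C\ell^{p}$. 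By contrast, the proofs the paper actually gives (Theorems \ref{main} and \ref{reg}) never touch a weak formulation or an Euler--Lagrange equation: they exploit only minimality, comparing $E_{\Omega_\ell}(u_\ell)$ and $E_{\Omega_\ell}(u_\infty)$ against the interpolants $\hat\psi_{\ell_0,\ell}$, $\check\psi_{\ell_0,\ell}$ of \eqref{eq:Prosenjitlinear combination} after identifying $u_\infty$ with the minimizer $w_\ell$ over $V^{1,q}(\Omega_\ell)$ (Theorem \ref{wl}), and then iterate the resulting energy inequality; in the $q=2$ case with \eqref{alpha2} this recovers exactly your difference inequality \eqref{sd} and the same exponential rate. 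So your route buys the linearity-specific shortcut (the difference $w$ solves the homogeneous equation, making the Caccioppoli step immediate), while the paper's variational route buys generality: it needs no regularity of $F$ and no equation for $u_\ell$. Two small technical caveats in your write-up: the Minkowski-functional cut-off has a uniformly bounded gradient only if $\omega_1$ is star-shaped with respect to a ball about the origin, not merely the point (the paper simply postulates such cut-offs, so this is cosmetic), and at the last iteration step $t+1=\ell$ you should check $\rho_t w\in W_0^{1,2}(\Omega_\ell)$ by writing $\rho_t w=\rho_t u_\ell-\rho_t u_\infty$ and approximating $u_\infty$ by $C_c^\infty(\omega_2)$ functions rather than appealing to compact support in $\ell\omega_1$.
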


Their proof relies on a suitable choice of test function in the weak formulation of \eqref{eq:4} and an iteration technique. 
\smallskip 

In \cite{z}, the authors studied similar issues for the case of $F(\xi) = |\xi|^q$ where $q \geq 2$.  In their case the minimizer satisfies 
the following Euler-Lagrange equation:
\begin{equation*}
\label{eq:77}
\left\{
\begin{aligned}
  & - {\rm div }\left(|\grad u_\ell|^{q-2}\grad u_\ell \right) = f \quad \text{ in }\Omega_\ell,\\
    &u_\ell = 0 \quad \text{ on }\del\Omega_\ell.
     \end{aligned}
\right.
\end{equation*}
In this case also they obtained the convergence of $u_\ell$ toward the solution of an associated problem set on the cross section $\omega_2$.  Their main result is the following.

\begin{theorem}\label{jjr}
\emph{[Chipot-Xie]}
For $q \geq 2$, there exists some constants $A, r > 0$, such that  
\begin{equation*}
\label{chk}
\int_{\Omega_{\frac{\ell}{2}}} |\grad (u_\ell - u_\infty)|^q \leq A \ell^{-r}
\end{equation*}
where $u_\infty$ is the solution to the problem \begin{equation*}
\label{eq:5}
\left\{
\begin{aligned}
  & -{\rm div_{X_2}} \left( |\grad_{X_2} u_\infty |^{q-2}\grad u_\infty\right) = f \quad \text{ in }\omega_2,\\
    &u = 0 \quad \text{ on }\del\omega_2.
     \end{aligned}
\right.
\end{equation*}
\end{theorem}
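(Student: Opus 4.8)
The plan is to combine an a priori polynomial energy bound with a nonlinear Caccioppoli‑type inequality on nested subcylinders and an iteration over scales, in the spirit of the proof of \rth{jjh} but keeping track of the extra terms coming from the nonlinearity of the $q$-Laplacian. First I would record the two weak formulations: $u_\ell$ satisfies $\int_{\Omega_\ell}|\grad u_\ell|^{q-2}\grad u_\ell\cdot\grad\varphi=\int_{\Omega_\ell}f\varphi$ for all $\varphi\in W_0^{1,q}(\Omega_\ell)$, and $u_\infty$, extended to $\Omega_\ell$ independently of $X_1$, satisfies the slicewise identity $\int_{\omega_2}|\grad_{X_2}u_\infty|^{q-2}\grad_{X_2}u_\infty\cdot\grad_{X_2}\psi=\int_{\omega_2}f\psi$, hence, after integrating in $X_1$, the same identity tested against functions on $\Omega_\ell$ vanishing on the lateral part of $\del\Omega_\ell$. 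The competitor $u\equiv0$ in \eqref{equaation}, together with \eqref{quadratic} and the Poincar\'e inequality on the bounded cross section $\omega_2$, gives $\int_{\Omega_\ell}|\grad u_\ell|^q\leq C\ell^{p}$; a sharper bound follows from uniform convexity: since $F(0,\xi_2)\leq F(\xi_1,\xi_2)$ for $F(\xi)=|\xi|^q$ and $u_\ell(X_1,\cdot)\in W_0^{1,q}(\omega_2)$ for a.e.\ $X_1$, one has $E_{\Omega_\ell}(u_\ell)\geq\ell^p|\omega_1|\,E_{\omega_2}(u_\infty)$, whereas testing the minimality of $u_\ell$ against the admissible cut-off $\zeta(X_1)u_\infty(X_2)$ ($\zeta\equiv1$ on $(\ell-1)\omega_1$, $\zeta=0$ near $\del(\ell\omega_1)$) gives $E_{\Omega_\ell}(u_\ell)\leq\ell^p|\omega_1|\,E_{\omega_2}(u_\infty)+C\ell^{p-1}$. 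Inserting this two-sided squeeze into \eqref{alpha} applied to $u_\ell$ and that cut-off yields the a priori bound $\int_{\Omega_\ell}|\grad(u_\ell-u_\infty)|^q\leq C\ell^{p-1}$.

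Next I would prove the key estimate. For $\tfrac\ell2\leq s<t\leq\ell$ choose a Lipschitz $\rho=\rho(X_1)$ with $\rho\equiv1$ on $s\omega_1$, $\mathrm{supp}\,\rho\subset t\omega_1$ and $|\grad\rho|\leq C/(t-s)$ (star-shapedness of $\omega_1$ ensures $s\omega_1\subset t\omega_1$ and the existence of such cut-offs). Then $\rho^q(u_\ell-u_\infty)\in W_0^{1,q}(\Omega_\ell)$, so subtracting the two weak formulations and writing $\grad(\rho^q w)=\rho^q\grad w+q\rho^{q-1}w\grad\rho$ with $w:=u_\ell-u_\infty$ gives
\begin{equation*}
\int_{\Omega_\ell}\rho^q\bigl(|\grad u_\ell|^{q-2}\grad u_\ell-|\grad u_\infty|^{q-2}\grad u_\infty\bigr)\cdot\grad w=-q\int_{\Omega_\ell}\rho^{q-1}w\,\bigl(|\grad u_\ell|^{q-2}\grad u_\ell-|\grad u_\infty|^{q-2}\grad u_\infty\bigr)\cdot\grad\rho.
\end{equation*}
For $q\geq2$ the left-hand side is bounded below by $c_q\int_{\Omega_\ell}\rho^q|\grad w|^q$ via the monotonicity inequality for the $q$-Laplacian, while the right-hand side is controlled using $\bigl||a|^{q-2}a-|b|^{q-2}b\bigr|\leq C_q(|a|+|b|)^{q-2}|a-b|$, Young's inequality (absorbing a small multiple of $\int_{\Omega_\ell}\rho^q|\grad w|^q$) and the Poincar\'e inequality on $\omega_2$ (controlling $\int|w|^q$ by $\int|\grad w|^q$). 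Since $\grad\rho$ is supported in the shell $\Omega_t\setminus\Omega_s$, where $\int_{\Omega_t\setminus\Omega_s}|\grad u_\infty|^q\leq C\ell^{p-1}(t-s)$, one arrives, with $\Phi(t):=\int_{\Omega_t}|\grad(u_\ell-u_\infty)|^q$, at
\begin{equation*}
\Phi(s)\leq\frac{C}{(t-s)^q}\bigl(\Phi(t)-\Phi(s)\bigr)+C\,\ell^{\frac{(p-1)(q-2)}{q-1}}\,\frac{\bigl(\Phi(t)-\Phi(s)\bigr)^{1/(q-1)}}{(t-s)^{2/(q-1)}},\qquad\tfrac\ell2\leq s<t\leq\ell.
\end{equation*}

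Finally I would iterate this inequality along $N\sim\ell$ evenly spaced scales $s=t_0<t_1<\dots<t_N=\ell$. For $q=2$ the nonlinear term vanishes, the inequality becomes $\Phi(t_{j+1})\geq(1+c)\Phi(t_j)$, and the exponential rate of \rth{jjh} is recovered. For $q>2$ one gets instead a recursion of the form $\Phi(t_{j+1})\geq\Phi(t_j)+c\,\ell^{-\beta}\Phi(t_j)^{q-1}$ with $\beta=\beta(p,q)\geq0$; telescoping the quantity $\Phi(t_j)^{-(q-2)}$ and using the a priori ceiling $\Phi(\ell)\leq C\ell^{p-1}$ then forces $\Phi(\ell/2)\leq A\ell^{-r}$ for a suitable $r=r(p,q)>0$, which is the assertion. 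The main obstacle is exactly this iteration: because the nonlinear term carries the exponent $\tfrac1{q-1}<1$ the recursion is only sub-geometric, so exponential decay is genuinely lost and one must balance carefully the polynomial ceiling $\ell^{p-1}$, the extra factor $\ell^{(p-1)(q-2)/(q-1)}$ coming from the volumes of the shells, and the number and size of the scales in order to produce a strictly positive $r$; optimizing the step size (and, if needed, bootstrapping the intermediate bound thus obtained) is the delicate point, whereas the remaining ingredients — admissibility of the test functions, the algebraic inequalities for the $q$-Laplacian, and the Poincar\'e step — are routine.
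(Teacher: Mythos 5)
Your route is genuinely different from what this paper does. The statement you were asked to prove is quoted from Chipot--Xie and is not reproved here; the closest in-house argument is the purely variational proof of Theorem~\ref{main} in Section~3, which never uses an Euler--Lagrange equation (so it covers non-differentiable $F$) and, for $p=1$, yields Theorem~\ref{jjr} with the explicit rate $\ell^{-1/(q-1)}$. You instead exploit the $q$-Laplace equation: subtract the two weak formulations against $\rho^q(u_\ell-u_\infty)$, use the monotonicity inequality $(|a|^{q-2}a-|b|^{q-2}b)\cdot(a-b)\geq c_q|a-b|^q$ and the Lipschitz bound $\bigl||a|^{q-2}a-|b|^{q-2}b\bigr|\leq C_q(|a|+|b|)^{q-2}|a-b|$, Poincar\'e on $\omega_2$, and a scale iteration --- essentially the original Chipot--Xie strategy. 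The ingredients are sound: your two-sided energy squeeze is the content of Theorem~\ref{s} combined with uniform convexity \eqref{alpha}; the Caccioppoli-type inequality replaces the paper's comparison of the interpolants $\hat{\psi},\check{\psi}$ via \eqref{alpha} and the identification $w_\ell=u_\infty$ of Theorem~\ref{wl}; and your telescoping of $\Phi^{-(q-2)}$ is the same device the paper applies to $a_m^{1-q}$ after \eqref{vv}. For $p=1$ your outline closes (unit steps give a rate of order $\ell^{-1/(q-2)}$ for $q>2$, exponential for $q=2$), so it is an acceptable alternative proof in that case; what the paper's method buys is generality in $F$, what yours buys is explicitness for $F(\xi)=|\xi|^q$ and a direct recovery of the $q=2$ exponential rate.

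The one genuine gap is the case $p\geq2$, which the statement as recalled formally allows and which you yourself flag as ``the delicate point''. With your own bookkeeping the factor $\ell^{(p-1)(q-2)/(q-1)}$ is unavoidable, because $\int_{\Omega_t\setminus\Omega_s}|\grad u_\infty|^q\sim\ell^{p-1}(t-s)$ is the energy of $u_\infty$ itself, not of the difference; consequently the recursion $\Delta_j\gtrsim\ell^{-(p-1)(q-2)}\Phi_j^{q-1}$ summed over $N\sim\ell$ unit steps gives $\Phi(\ell/2)\lesssim\ell^{-(1-(p-1)(q-2))/(q-2)}$, which is a positive rate only when $(p-1)(q-2)<1$, and optimizing the step length relaxes this at best to $(p-1)(q-2)<2$. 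Bootstrapping does not repair this, since an improved bound on $\grad(u_\ell-u_\infty)$ does not reduce the $u_\infty$-contribution to the shell weight. So for $p\geq2$ and $q$ large your scheme as written does not produce $r>0$; note that the present paper faces the same obstruction (Corollary~\ref{za svaki skup} is $\ell$-independent only because $p=1$, and for $p>1$ only the partial Theorem~\ref{p > 1 } is available). You should therefore either restrict your claim to $p=1$, matching the scope of Theorem~\ref{main}, or supply a genuinely new estimate for the higher-dimensional case rather than deferring it to ``careful balancing''.
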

\smallskip

We emphasize that  in our main result (Theorem \ref{main})  we do not assume any regularity on $F$ (except that the condition \eqref{quadratic} forces $F$ to be differentiable at $0$), and hence
our problem \eqref{equaation} is purely variational in nature ($u_\ell$ does not satisfy any Euler type equation). Hence this work is a generalization of the work 
\cite{karen} and \cite{z}. 
\smallskip

The following relation between the problems \eqref{equaation} and \eqref{equaation1} (for large $\ell$) is the main result of this paper. 

\begin{theorem}
\label{main}
Under the assumption $p =1$, $q \geq 2$, \eqref{alpha} and \eqref{quadratic}, if $u_\ell$ and $u_\infty$ satisfy \eqref{equaation} and \eqref{equaation1} respectively then
$$\int_{\Omega_{\frac{\ell}{2}}}|\grad(u_\ell - u_\infty)|^q \leq \frac{A}{\ell^{\frac{1}{q-1}}},$$
for some constant $A > 0$ (independent of $\ell$). 
\end{theorem}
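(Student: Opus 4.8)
The plan is to follow the variational strategy of \cite{karen} and \cite{z}, comparing energies rather than using an Euler--Lagrange equation, since $F$ need not be differentiable. First I would set $v_\ell = u_\ell - u_\infty$ (extending $u_\infty$ to $\Omega_\ell$ by $u_\infty(X)=u_\infty(X_2)$, which lies in $W^{1,q}(\Omega_\ell)$ but is not in $W^{1,q}_0(\Omega_\ell)$, so some cutoff in the $X_1 = x_1$ variable is needed near the ends of $\ell\omega_1$). The key quantitative input is a \emph{local energy estimate}: for a cutoff function $\psi = \psi(x_1)$ supported in $\ell\omega_1$ and equal to $1$ on $(\ell/2)\omega_1$, one tests the minimality of $u_\ell$ against $u_\ell - \psi^{?} v_\ell$ and the minimality of $u_\infty$ against $u_\infty + \psi^{?} v_\ell$, adds the two resulting inequalities, and uses the uniform convexity \eqref{alpha} to pull out a term $\alpha \int \psi^q |\grad v_\ell|^q$ on the left. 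The right-hand side will contain an error term supported on the transition region $\{1/2 \le \psi \le 1\}$ (an annular strip in the $x_1$-direction of bounded width), controlled by $\grad\psi$, of the form $C\int_{\mathrm{strip}} |\grad v_\ell|^{q-1}|v_\ell| |\grad\psi| + \dots$, using \eqref{quadratic} to bound $F$ and the convexity of $t\mapsto t^q$.

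The heart of the argument is then an \emph{iteration / hole-filling} on dyadic (or unit-width) slices in the $x_1$-variable. Write $\mathcal E(t) = \int_{\{|x_1|\le t\}\cap\Omega_\ell} F(\grad v_\ell)$ or more precisely a suitable "tail energy" $h(t) = \int_{(\ell/2 \,:\, t)\text{-slab}} |\grad v_\ell|^q$; the local estimate above, with a cutoff that transitions over a slab of width $1$ between level $t$ and $t+1$, gives a recursion of the type
$$h(t) \;\le\; C\big(h(t) - h(t+1)\big)^{\theta}\,\big(\text{something}\big)^{1-\theta},$$
or after applying Young/Hölder, $h(t+1) \le h(t) - c\, h(t)^{q/(q-1)} \cdot(\text{const})$, i.e. a difference inequality whose solutions decay polynomially like $t^{-1/(q-1)}$. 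Choosing the number of admissible slabs to be comparable to $\ell$ (since $\omega_1\subset\R^1$ is an interval, $\ell\omega_1$ has length of order $\ell$) and running the iteration from the ends inward to $(\ell/2)\omega_1$ produces the bound $A\,\ell^{-1/(q-1)}$. I would also need, at the start, a \emph{global a priori bound} $\int_{\Omega_\ell} |\grad v_\ell|^q \le C\ell$ (equivalently a bound on energy per unit length), obtained by testing $u_\ell$ against $u_\infty$ (cut off) and vice versa once, using \eqref{quadratic} and a Poincar\'e inequality in the $X_2$-variable only — this is where $f\in L^{q'}(\omega_2)$ (independent of $X_1$) and the product structure of $\Omega_\ell$ are essential, so the energy genuinely grows only linearly in $\ell$.

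The main obstacle I anticipate is the exponent bookkeeping in the iteration that produces exactly $1/(q-1)$ rather than a worse exponent: the nonlinear term from \eqref{alpha} is $\int\psi^q|\grad v_\ell|^q$ while the error term is only $\int|\grad\psi|\,|\grad v_\ell|^{q-1}|v_\ell|$, so matching powers forces a careful use of H\"older with conjugate exponents $q$ and $q'=q/(q-1)$ together with a Poincar\'e inequality on the slab to absorb $\|v_\ell\|$ by $\|\grad_{X_2} v_\ell\|$ — the restriction $p=1$ is what keeps the transition region of \emph{bounded} width so that this Poincar\'e constant does not degenerate, and $q\ge 2$ is what makes \eqref{alpha} available at all. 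A secondary technical point is handling the two "caps" of the cylinder where $u_\infty\notin W_0^{1,q}$: there one pays an extra boundary-layer error, but because $\omega_1$ is star-shaped with respect to $0$ one can use a radial (in $x_1$) dilation/cutoff of $u_\infty$ whose gradient error is controlled, contributing only to the constant $A$ and not to the rate. Once the difference inequality $h(t+1)\le h(t) - c\,h(t)^{q/(q-1)}$ is in hand, the polynomial decay is a standard lemma (see the iteration in \cite{z}).
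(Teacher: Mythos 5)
Your overall strategy (cutoff comparison of energies, uniform convexity \eqref{alpha} to extract the difference, iteration over unit slabs in $x_1$) is the right family of ideas and is indeed how the paper proceeds, but there are two genuine gaps. First, the step ``test the minimality of $u_\infty$ against $u_\infty+\psi v_\ell$'' is not available as stated: $u_\infty$ is only known to minimize the cross-sectional functional $E_{\omega_2}$ over $W_0^{1,q}(\omega_2)$, whereas your competitor depends on $x_1$, so you need the inequality $E_{\Omega_\ell}(u_\infty)\leq E_{\Omega_\ell}(u_\infty+\psi v_\ell)$ on the cylinder, and this is not automatic for a general convex $F$ (one cannot argue $F(0,\grad_{X_2}w)\leq F(\grad w)$; this fails already for anisotropic quadratic $F$ with cross terms). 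The paper fills exactly this hole by introducing the auxiliary class $V^{1,q}(\Omega_\ell)$ of functions with equal traces on the two ends and proving (Theorem \ref{wl}) that its minimizer is $u_\infty$ for every $\ell$, via averaging in $X_1$, the divergence theorem (which kills the mean of $\grad_{X_1}$ thanks to the equal end values) and Jensen's inequality; your proposal contains no substitute for this lemma, and without it the second comparison inequality has no justification. Note also that the competitors must be the midpoint-type functions $u_\ell+\tfrac{\psi}{2}(u_\infty-u_\ell)$ and $u_\infty+\tfrac{\psi}{2}(u_\ell-u_\infty)$ (your ``$\psi^{?}$''): with the full swap the inner contributions cancel identically and \eqref{alpha} yields nothing.

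Second, the a priori input you propose, the global bound $\int_{\Omega_\ell}|\grad u_\ell|^q\leq C\ell$, is too weak to run the iteration at the claimed rate. After H\"older, each step of the recursion has the form $a_m\leq M\bigl(a_{m+1}-a_m\bigr)^{1/q}$ with $a_m=\int_{\Omega_{\ell/2+m}}|\grad(u_\ell-u_\infty)|^q$, where the constant $M$ carries a factor $\bigl(\int_{D}|\grad u_\ell|^q+|\grad u_\infty|^q\bigr)^{(q-1)/q}$ over the transition slab $D$; if you only know the linear-in-$\ell$ global bound, this factor may be of size $\ell^{(q-1)/q}$ on the slabs you actually use, the constant degenerates, and the scheme gives no decay for $q>2$ and only boundedness for $q=2$. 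What is needed, and what the paper proves separately (Proposition \ref{new} and Corollary \ref{za svaki skup}, via a localization of the energy using the cutoff $(1-\rho_{s,t})u_\ell$ and a case analysis on the first enlarged slab with nonpositive energy), is a slab-wise bound $\int_{D_{\ell_0}}|\grad u_\ell|^q\leq \tilde C$ uniform in $\ell$ and $\ell_0$; this estimate is the second missing ingredient. Finally, your model difference inequality $h(t+1)\leq h(t)-c\,h(t)^{q/(q-1)}$ has the wrong exponent: solutions of that recursion decay like $t^{-(q-1)}$, whereas the correct recursion coming from the estimate above is $a_{m+1}\geq a_m+c\,a_m^{q}$ (together with $a_m\leq M_1$), which upon summing $a_m^{1-q}-a_{m+1}^{1-q}\geq C_1$ over $\sim\ell/2$ slabs gives precisely the rate $\ell^{-1/(q-1)}$.
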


For the case when $q=2$, note that the rate of convergence in Theorem \ref{jjh} is exponential, 
where in Theorem 1.3  it is  $O(\ell^{-1})$. But it is possible to recover an exponential rate of convergence  under an
 additional assumption on $F$, namely
 \begin{equation}
\label{alpha2}
2F(\frac{\xi + \eta}{2}) + \beta |\xi -\eta|^2 \geq F(\xi)+F(\eta) \hspace*{3mm} \forall \xi, \eta \in \R^n  \ \textrm{ and  for some}  \  \beta \geq \alpha.
\end{equation} 

 More  precisely  in this direction our result is the following.

\begin{theorem}
\label{reg}
Under the assumption \eqref{alpha},  \eqref{alpha2} and $q=2$,
if $u_\ell$ and $u_\infty$ satisfy \eqref{equaation} and \eqref{equaation1} respectively then one has
$$\int_{\Omega_{\frac{\ell}{2}}}|\grad(u_\ell - u_\infty)|^2 \leq  Ae^{-B\ell},$$
for some constant $A,  B > 0$  (independent of $\ell$).
\end{theorem}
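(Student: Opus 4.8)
The plan is to mimic the proof of Theorem~\ref{main} but to exploit the extra upper bound \eqref{alpha2} to replace the algebraic decay by an exponential one, via a Gronwall/iteration argument rather than a single interpolation. Introduce, for $t>0$, the ``energy tail''
\begin{equation*}
\varphi(t) := \int_{\Omega_t} |\grad(u_\ell - u_\infty)|^2,
\end{equation*}
where $u_\infty$ is regarded as a function on $\Omega_\ell$ independent of $X_1$ (here $p=1$, so $\omega_1$ is an interval and $\ell\omega_1$ an interval of length proportional to $\ell$). The two convexity hypotheses pinch $F$ between two quadratics in the sense that, after the usual manipulation with the minimality of $u_\ell$ on $\Omega_\ell$ and of $u_\infty$ on the cross-section, one gets both a coercivity estimate from \eqref{alpha} (controlling $\varphi$ from below by a quadratic form in $\grad(u_\ell-u_\infty)$) and, crucially, an estimate from above: testing the minimality of $u_\ell$ against $u_\infty$ cut off by a Lipschitz function $\psi_t$ that equals $1$ on $\Omega_t$ and $0$ outside $\Omega_{t+1}$, and using \eqref{alpha2} to bound the resulting $F$-increments by $\beta|\cdot|^2$ terms, one obtains a differential inequality of the form
\begin{equation*}
\varphi(t) \leq C\,\bigl(\varphi(t+1) - \varphi(t)\bigr) \quad \text{for } 0 < t \leq \tfrac{\ell}{2},
\end{equation*}
with $C = C(\alpha,\beta,\lambda,\Lambda)$ independent of $\ell$ — this is exactly the point where $q=2$ matters, since only then do the lower and upper convexity controls match up to constants and the cross terms are quadratic.

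From such an inequality the exponential decay is standard: it gives $\varphi(t) \leq \theta\,\varphi(t+1)$ with $\theta = C/(C+1) < 1$, hence iterating over $[\ell/2]$ unit steps,
\begin{equation*}
\varphi(\tfrac{\ell}{2}) \leq \theta^{[\ell/2]}\,\varphi(\ell) \leq \theta^{[\ell/2]}\,E,
\end{equation*}
where $E := \varphi(\ell) = \int_{\Omega_\ell}|\grad(u_\ell-u_\infty)|^2$ is bounded independently of $\ell$. That last a priori bound comes from the growth condition \eqref{quadratic}: plug $u_\ell$ and (the extension of) $u_\infty$ into their respective variational problems, use $\lambda|\xi|^2 \leq F(\xi)$ together with a Poincar\'e inequality in the $X_2$ variable (whose constant depends only on $\omega_2$) and the fact that $f \in L^{q'}(\omega_2)$ depends only on $X_2$; the linear term $\int f u$ is likewise controlled by $\omega_2$-data only. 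One then writes $\theta^{[\ell/2]} = e^{[\ell/2]\ln\theta} \leq A' e^{-B\ell}$ for suitable $A', B>0$, and combines with the coercivity lower bound to land the claimed estimate
\begin{equation*}
\int_{\Omega_{\ell/2}} |\grad(u_\ell - u_\infty)|^2 \leq A e^{-B\ell}.
\end{equation*}

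The main obstacle, and the step that deserves care, is establishing the differential inequality $\varphi(t) \leq C(\varphi(t+1)-\varphi(t))$ with constants uniform in $\ell$. One must choose the cut-off $\psi_t$ correctly so that the ``bad'' region where $\grad\psi_t \neq 0$ is a single slab $\Omega_{t+1}\setminus\Omega_t$, expand $F(\grad(\psi_t u_\ell + (1-\psi_t)u_\infty))$ carefully, and absorb the terms involving $\grad\psi_t$ using Young's inequality; the upper convexity \eqref{alpha2} is what lets these absorbed terms be reabsorbed into $\varphi(t+1)-\varphi(t)$ rather than producing a spurious $\ell$-dependent factor. A secondary technical point is that $u_\infty$, viewed on $\Omega_\ell$, must lie in $W^{1,q}_0(\Omega_\ell)$ after cutting off, which is fine since it vanishes on $\partial\omega_2$ and the cut-off kills it near the ends of $\ell\omega_1$; one should also check that the same test-function computation, read with the roles reversed (minimality of $u_\infty$ on the section), supplies the matching lower control so that the quadratic form $\int|\grad(u_\ell-u_\infty)|^2$ really is squeezed between the $F$-energies. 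Once the inequality is in hand, the iteration and the a priori bound are routine.
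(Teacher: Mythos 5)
Your plan is the paper's plan (sum the two minimality inequalities, pinch from below with \eqref{alpha} and from above with \eqref{alpha2}, localize the upper term to a unit slab, iterate the resulting geometric inequality, and beat the a priori bound with the exponential factor), but the step you yourself flag as the crux is described in a way that would fail if carried out literally. The competitor you write, $\psi_t u_\ell+(1-\psi_t)u_\infty$, is not admissible for $u_\ell$'s problem (at the ends $\partial(\ell\omega_1)\times\omega_2$ it equals $u_\infty\neq 0$, so it is not in $W_0^{1,2}(\Omega_\ell)$); and if you repair this by the natural swap, $v_1=u_\ell+\psi_t(u_\infty-u_\ell)$ tested in \eqref{equaation} and $v_2=u_\infty+\psi_t(u_\ell-u_\infty)$ tested against the minimality of $u_\infty$, then $v_1+v_2=u_\ell+u_\infty$ (so the $f$-terms do cancel) but $v_1-v_2=(1-2\psi_t)(u_\ell-u_\infty)$, which does \emph{not} vanish on $\Omega_t$. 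Applying \eqref{alpha} on the left and \eqref{alpha2} on the right then yields $\alpha\int_{\Omega_{t+1}}|\grad(u_\ell-u_\infty)|^2\le \beta\int_{\Omega_t}|\grad(u_\ell-u_\infty)|^2+C\bigl(\varphi(t+1)-\varphi(t)\bigr)$, and since \eqref{alpha2} together with \eqref{alpha} forces $\beta\ge\alpha$, nothing can be absorbed: you do not obtain $\varphi(t)\le C(\varphi(t+1)-\varphi(t))$. Nor can a purely one-sided argument (only $u_\ell$'s minimality plus Young) work: the surviving term $\int f\,\psi_t(u_\ell-u_\infty)$ is of size $\sqrt{t}\,\varphi(t+1)^{1/2}$ because $|f|_{2,\Omega_{t+1}}^2\sim t$, so it cannot be absorbed with $\ell$-independent constants; the exact cancellation of the linear terms is essential. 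The cure is precisely the paper's choice \eqref{eq:Prosenjitlinear combination} with the factor $\tfrac{\rho_{\ell_0}}{2}$: then $\hat{\psi}_{\ell_0,\ell}+\check{\psi}_{\ell_0,\ell}=u_\ell+u_\infty$ \emph{and} $\hat{\psi}_{\ell_0,\ell}-\check{\psi}_{\ell_0,\ell}=(1-\rho_{\ell_0})(u_\ell-u_\infty)$ vanishes on $\Omega_{\ell_0}$, so the \eqref{alpha2}-term lives only on $D_{\ell_0}$ and, after the Poincar\'e step \eqref{sx}, one lands exactly on \eqref{sd}. Note also that the ``roles reversed'' inequality is not minimality of $u_\infty$ on the cross-section used directly ($\check{\psi}_{\ell_0,\ell}$ depends on $X_1$); it is minimality of $u_\infty$ over $V^{1,2}(\Omega_\ell)$, i.e.\ Theorem \ref{wl}, which must be invoked.

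Two further corrections. Your a priori bound is false as stated: $\int_{\Omega_\ell}|\grad(u_\ell-u_\infty)|^2$ is \emph{not} bounded independently of $\ell$, since $u_\infty$ is independent of $X_1$ and hence $\int_{\Omega_\ell}|\grad u_\infty|^2=|\ell\omega_1|\,\int_{\omega_2}|\grad_{X_2}u_\infty|^2\sim\ell^p$, while $\int_{\Omega_\ell}|\grad u_\ell|^2\le C\ell^p$ is exactly \eqref{we}. This is harmless — $C\ell^p\bigl(\tfrac{C}{C+1}\bigr)^{[\ell/2]-1}$ is still exponentially small — but the iteration must be closed against the correct polynomially growing bound, as in the paper. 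Finally, the restriction to $p=1$ is unnecessary: Theorem \ref{reg} is stated (and proved in the paper) for general $p$, with $\rho_{\ell_0}=\rho_{\ell_0}(X_1)$ cut off between $\ell_0\omega_1$ and $(\ell_0+1)\omega_1$; the $p=1$ hypothesis in Theorem \ref{main} enters only through Proposition \ref{new} and Corollary \ref{za svaki skup}, which the $q=2$ argument never uses.
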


For $q > 2$, affine functions are the only functions which satisfies \eqref{alpha2}
and hence cannot satisfy \eqref{alpha} simultenously. This justifies the condition \eqref{alpha2}. The function $F(\xi) = |\xi|^2$, satisfies \eqref{alpha2}, with an equality sign, for $\beta = \frac{1}{2}$ and hence it also satisfies  \eqref{alpha}.   It is important here to mention that \eqref{alpha}  and \eqref{alpha2} together imply that $F \in C^1(\R^n)$ (see Lemma \ref{klm}). Hence $u_\ell$ in this case satisfies the following Euler  equation:
 \begin{equation*}
\left\{
\begin{aligned}
  & -{\rm div} \left(  \grad F (\grad u_\ell ) \right) = f \quad \text{ in }\Omega_\ell,\\
    &u_\ell = 0 \quad \text{ on }\del\Omega_\ell.
     \end{aligned}
\right.
\end{equation*}
Then,  following a technique of \cite{karen} one can obtain   a different proof of Theorem \eqref{reg}. But the proof  that we present here uses only the variational structure of the problem.
\smallskip

From the point of view of applications, many problems of mathematical physics are set on large cylindrical domains. For instance, these are
porous media flows in channels, plate theory, elasticity theory, etc. 
Many problems of the type  ``$\ell \rightarrow \infty$" were  studied in the past. Apart from  second order elliptic equations \cite{karen} already mentioned, 
this includes eigenvalue problems, parabolic problems, variational inequalities, Stokes problem, hyperbolic problems and many others. We refer to 
\cite{b,sor, c,dir,d,pr,ka,z, chow, gu,g} and the references there  for the literature available in this direction.
\smallskip

The work of this paper is organized as follows. In the next section we study two problems similar to \eqref{equaation} in dimension $1$ to develop a better
 understanding of this kind of issues in a simpler situation. In section 3 we will present the proof of Theorem \ref{main}. In  section 4, we will give the  proof of 
Theorem \ref{reg}. A partial result is obtained when the cylinder goes to infinity in more than one directions [see, Theorem \ref{p > 1 }].
 We will also present some results regarding the asymptotic behavior of  $ \frac{E_{\Omega_\ell}(u_\ell)}{2\ell}$ as $\ell$ 
tends to infinity. 

\section{A one dimensional problem}
In this section we assume that $\Omega_\ell = (-\ell,\ell)$,  $F$  is a  
function satisfying  \eqref{quadratic}. The functional, analogous to \eqref{func} in one dimensional situation, is defined as 
\begin{equation*}
\label{1func}
E_{\Omega_\ell}(u) = \int_{-\ell}^\ell F(u')-\gamma u, \ u\in W_0^{1,q}(\Omega_\ell)
\end{equation*}
where $\gamma \in \R$. To observe the asymptotic behavior of the minizers $u_\ell$, we take the test case when $F(x) = \frac{x^2}{2}$. Then one can write down explicitly the solution, after solving the associated Euler-Lagrange equation, as $u_\ell(x) = \frac{\gamma}{2}(\ell^2 -x^2), \ x \in (-\ell,\ell)$. This implies that $u_\ell \rightarrow  \pm \infty$ pointwise  depending on the sign of $\gamma$.\smallskip

Now we consider the case  of a general $F$ satisfying \eqref{quadratic} and $\gamma > 0$ (we are restricting ourselves to  a constant $\gamma$ but the proof goes through for a positive function $\gamma$ bounded away from zero uniformly). We do not assume any convexity on $F$ but the existence of a minimizer $u_\ell$ that we consider next. First note  that $$u_\ell \geq 0, \hspace*{3mm}\textrm{ in} \ \Omega_\ell.$$ This is because, if $u_\ell < 0$ on $A \subset \Omega_\ell$, where $|A| > 0$. Then defining $w_\ell = \max\{0, u_\ell\}$, one gets $E_{\Omega_\ell}(w_\ell ) < E_{\Omega_\ell}(u_\ell)$, which contradicts the definition of $u_\ell$.\smallskip

We define
\begin{equation*}
\label{un}
u_\ell(x_\ell) = \max_{x \in [-\ell,\ell]} u_\ell(x).
\end{equation*}
 Note that since $u_\ell$ is a  continuous function, such a $x_\ell \in [-\ell,\ell]$ always exists.
\begin{lemma}\label{mo}
$u_\ell$ is non decreasing  on $(-\ell, x_\ell)$ and non increasing on $(x_\ell,\ell)$.
\end{lemma}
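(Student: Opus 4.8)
The plan is to argue by a simple rearrangement/cut-and-paste comparison, exploiting minimality of $u_\ell$ together with the one-dimensional structure. Suppose, for contradiction, that $u_\ell$ fails to be non-decreasing on $(-\ell, x_\ell)$. Then there exist $a < b$ in $(-\ell, x_\ell)$ with $u_\ell(a) > u_\ell(b)$; since $u_\ell(x_\ell)$ is the maximum and $u_\ell$ is continuous with $u_\ell(\pm\ell)=0$, one can find a level $c$ with $u_\ell(b) < c < u_\ell(a)$ such that the superlevel set $\{x \in (-\ell,\ell) : u_\ell(x) > c\}$ is \emph{not} an interval — it has a connected component lying to the left of $x_\ell$ and another component containing $x_\ell$. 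The idea is that the functional $E_{\Omega_\ell}$ depends on $u_\ell$ only through the distribution of its gradient (via $F(u')$) and through $\int \gamma u$; rearranging $u_\ell$ into a ``unimodal'' profile with the same distribution function does not change $\int F(u')$ by more than... — actually the cleanest route avoids distribution functions entirely.

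Instead I would reflect/slide the offending bump. Concretely, let $c$ be a regular value as above and let $I = (s,t) \subset (-\ell,x_\ell)$ be a connected component of $\{u_\ell > c\}$ with $t < x_\ell$ (so $u_\ell(s) = u_\ell(t) = c$ and $u_\ell > c$ on $I$), while $x_\ell$ lies in a different component $J = (s',t')$ with $s' > t$. Define a competitor $w_\ell$ by lowering $u_\ell$ to the constant value $c$ on $I$ and, to compensate, raising it appropriately — or more simply, by the following surgery: on $[t,s']$ the function $u_\ell$ dips down to $c$ at both ends; replace the pair (bump on $I$) $\cup$ (the segment $[t,s']$) by first the flat segment at height $c$ and then the bump, i.e. translate the graph over $I$ rightward so that it sits immediately to the left of $J$. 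This is a measure-preserving rearrangement of the $x$-variable on the interval $[s, s']$, so $\int_s^{s'} F(w_\ell') = \int_s^{s'} F(u_\ell')$ exactly, while $\int_s^{s'} \gamma w_\ell \ge \int_s^{s'} \gamma u_\ell$ because we have moved mass of height $> c$ (hence of height $\ge 0$, and in fact the translation only redistributes, keeping the integral of $u_\ell$ the same). That last point forces me to be a little more careful: a pure translation preserves $\int \gamma u$ as well, giving $E_{\Omega_\ell}(w_\ell) = E_{\Omega_\ell}(u_\ell)$, not a strict inequality — so the contradiction has to come from a strict \emph{improvement}, which means I should instead push the bump up against $J$ and \emph{merge}, creating a genuinely unimodal competitor and using that any non-monotone configuration can be strictly improved.

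So the sharper version: since $u_\ell$ is not non-decreasing to the left of $x_\ell$, pick $a<b<x_\ell$ with $u_\ell(a)>u_\ell(b)$; set $m = \min_{[a,x_\ell]} u_\ell$, attained at some $b_0 \in (a, x_\ell)$ with $m < u_\ell(a) \le u_\ell(x_\ell)$. Replace $u_\ell$ on $[a, b_0]$ by the constant $u_\ell(a)$? No — that raises the gradient cost at the endpoint. The honest approach is the following: define $w_\ell = \max\{u_\ell, \widetilde u_\ell\}$ where $\widetilde u_\ell$ is the reflection of $u_\ell|_{[b_0,\ell]}$ across the vertical line $x = b_0$... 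This is getting intricate. The main obstacle, and what the real proof must pin down, is producing a competitor with \emph{strictly} smaller energy — the gradient term $\int F(u')$ is invariant under many rearrangements, so strictness has to be squeezed out either from the linear term $-\gamma u$ with $\gamma > 0$ (moving mass upward/toward the peak strictly decreases $-\int\gamma u$) or from convexity-type considerations on $F$ near $0$. Given the paper explicitly does \emph{not} assume convexity of $F$ here, I expect the argument to run: if $u_\ell$ were non-monotone on $(-\ell, x_\ell)$, form the ``monotone rearrangement'' $u_\ell^\star$ (the unique non-decreasing function on $(-\ell, x_\ell)$ equimeasurable with $u_\ell|_{(-\ell,x_\ell)}$, matched to a non-increasing piece on $(x_\ell,\ell)$), observe $\int F((u_\ell^\star)') \le \int F(u_\ell')$ with the level-set/coarea bookkeeping (each level set of $u_\ell^\star$ has \emph{no more} boundary points than that of $u_\ell$, and $F\ge 0$), while $\int \gamma u_\ell^\star = \int \gamma u_\ell$, and that the inequality on the $F$-term is \emph{strict} unless $u_\ell$ was already monotone on each side — contradicting minimality. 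I would carry it out in this order: (1) reduce to showing monotonicity on the left half, the right half being symmetric; (2) set up the monotone rearrangement and its basic properties (equimeasurability, boundary value $0$ preserved at $-\ell$, value $u_\ell(x_\ell)$ preserved at $x_\ell$, so $w_\ell \in W^{1,q}_0$); (3) prove $\int F(w_\ell') \le \int F(u_\ell')$ via a level-set count, with equality iff $u_\ell$ monotone; (4) note the linear term is unchanged; (5) conclude $E_{\Omega_\ell}(w_\ell) \le E_{\Omega_\ell}(u_\ell)$ with strict inequality if $u_\ell$ is non-monotone, contradicting the definition of $u_\ell$. The delicate step — the one I would budget the most care for — is step (3): making the coarea/level-set counting argument rigorous for a general (merely Sobolev) $u_\ell$ and a non-convex $F\ge 0$, and extracting strictness.
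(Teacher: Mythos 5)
Your final plan stands or falls on step (3): that the monotone rearrangement satisfies $\int F((u_\ell^\star)')\le\int F(u_\ell')$, with strict inequality unless $u_\ell$ is already monotone. This is precisely where the argument breaks under the paper's hypotheses: in this section $F$ is \emph{not} assumed convex — only the growth bounds \eqref{quadratic} are imposed. A one-dimensional P\'olya--Szeg\H{o}-type inequality of the kind you invoke needs convexity of $F$ (or at least monotonicity of $F(\xi)/|\xi|$): rearrangement replaces several steep crossings of a level by a single gentler crossing, and the coarea bookkeeping $\int F(u')\,dx=\int\bigl(\sum_{x\in u^{-1}(t)}F(|u'(x)|)/|u'(x)|\bigr)\,dt$ only closes if flattening slopes cannot increase $F(\xi)/|\xi|$. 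With only $\lambda|\xi|^q\le F(\xi)\le\Lambda|\xi|^q$ this can fail outright: take $q=2$, $\lambda=1$, $\Lambda$ large, and an admissible $F$ with $F(1)=1$ but $F(1/3)$ close to $\Lambda/9>1$; a zigzag of slopes $\pm1$ whose monotone rearrangement has slope $1/3$ then has strictly \emph{larger} gradient energy after rearrangement. Since the rearrangement leaves $\int\gamma u$ unchanged, your competitor need not beat $u_\ell$ and no contradiction results; and even where the inequality does hold, the strictness you need in step (5) is not available without extra structure. Notice that you correctly flagged $\gamma>0$ as a possible source of strictness, and then built a competitor for which the $\gamma$-term is invariant — discarding exactly the mechanism that makes the lemma true.

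The paper's proof is the elementary surgery you came within a step of and abandoned. If $u_\ell(y_0)>u_\ell(y_1)$ with $-\ell<y_0<y_1<x_\ell$, pick $z\in(y_0,x_\ell)$ minimizing $u_\ell$ there and let $I_z$ be the connected component of the open set $\{u_\ell<u_\ell(y_0)\}$ containing $z$; since $y_0\notin I_z$ and $u_\ell(x_\ell)\ge u_\ell(y_0)$, one has $I_z\subset(y_0,x_\ell)$ and $u_\ell=u_\ell(y_0)$ at both endpoints of $I_z$. Setting $\phi_\ell=u_\ell$ off $I_z$ and $\phi_\ell\equiv u_\ell(y_0)$ on $I_z$ gives a continuous $W_0^{1,q}$ competitor: on $I_z$ the gradient contribution drops to $F(0)=0\le F(u_\ell')$ (both facts from \eqref{quadratic}), while $-\gamma\int_{I_z}\phi_\ell<-\gamma\int_{I_z}u_\ell$ strictly, because $\gamma>0$ and $u_\ell<u_\ell(y_0)$ on the set $I_z$ of positive measure. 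Hence $E_{\Omega_\ell}(\phi_\ell)<E_{\Omega_\ell}(u_\ell)$, contradicting minimality; the right-hand side is symmetric. Your objection to ``replace by a constant'' (extra gradient cost at an endpoint) disappears once the interval is taken to be a full component of the sublevel set rather than $[a,b_0]$: the constant matches $u_\ell$ at both ends, so no jump and no cost, and no rearrangement machinery is needed.
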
 
\begin{proof} If $u_\ell$ is not non decreasing on $(-\ell, x_\ell)$ there exist $y_0 $ and $y_1$ such that 
$$
-\ell < y_0 < y_1 < x_\ell ~~~,~~~u_\ell(y_0) > u_\ell(y_1).
$$
Let $z$ be a point in $(y_0, x_\ell)$ such that $u_\ell(z)= \min_{(y_0, x_\ell)}u_\ell$ and $I_z$ the connected component of 
$\{x~| ~u_\ell(x) < u_\ell(y_0)\}$ containing $z$. 
Define then $\phi_\ell$ by
$$
 \phi_\ell = \left\{
 \begin{array}{ll}
 u_\ell \hspace{8.7mm}&\textrm{on} \ \Omega_\ell \setminus I_z, \nonumber\\
  u_\ell(y_0)  \hspace{16.9mm} &\textrm{on}  \ I_z. \nonumber
\end{array}
\right.
$$
Clearly this function satisfies $E_{\Omega_\ell}(\phi_\ell) < E_{\Omega_\ell}(u_\ell)$ which contradicts the definition of $u_\ell$. The proof that $u_\ell$ is non increasing on $(x_\ell,\ell)$ follows the same way.
\end{proof}

\begin{theorem}
$u_\ell(x) \rightarrow \infty$ pointwise $\forall x$, and for fixed $a < b$, one has 
$\int_a^bu_\ell^s \rightarrow \infty,$ for all $s> 0$. 
\end{theorem}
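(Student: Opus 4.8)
The plan is to exploit the monotonicity of $u_\ell$ established in Lemma~\ref{mo} together with a quantitative lower bound on the maximum value $u_\ell(x_\ell)$, and then to control the ``width'' of the region on which $u_\ell$ can be small. First I would show that $u_\ell(x_\ell) \to \infty$. To this end, compare $u_\ell$ with the explicit competitor obtained from the quadratic test case, or more robustly: since $F$ satisfies \eqref{quadratic}, for any constant test function is impossible ($u$ must vanish on the boundary), so instead take the tent function $v_\ell$ of height $h$ on $(-\ell,\ell)$, i.e. $v_\ell$ linear from $(-\ell,0)$ to $(0,h)$ and back down. Then $E_{\Omega_\ell}(v_\ell) \le \Lambda \int |v_\ell'|^q - \gamma \int v_\ell = 2\Lambda h^q \ell^{1-q} \cdot(\text{const}) - \gamma h \ell + (\text{l.o.t.})$; optimizing in $h$ gives $E_{\Omega_\ell}(v_\ell) \le -c\,\ell^{1+\frac{1}{q-1}}$ for large $\ell$, and $h \sim \ell$. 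On the other hand, from $\lambda|u_\ell'|^q \le F(u_\ell')$ and the minimality $E_{\Omega_\ell}(u_\ell) \le E_{\Omega_\ell}(v_\ell) < 0$ we get, via H\"older and Poincar\'e on $(-\ell,\ell)$, a bound forcing $\|u_\ell\|_\infty$ to grow; more directly, $|u_\ell(x_\ell)|^{q} \gtrsim $ (energy)$/\ell$-type estimates show $u_\ell(x_\ell) \to \infty$.

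Next, the key geometric step: fix $a<b$ and let $M>0$ be arbitrary. I claim that for $\ell$ large, $u_\ell \ge M$ on all of $[a,b]$. Suppose $u_\ell(x_0) < M$ for some $x_0 \in [a,b]$. By Lemma~\ref{mo}, $u_\ell$ is monotone on each side of $x_\ell$, so the set $\{u_\ell < M\}$ is an interval (or a union of at most two intervals, one from each side), and on that set $u_\ell$ rises from $< M$ to $u_\ell(x_\ell)$ and falls back to $0$ at the endpoints $\pm\ell$. Now I test with $\psi_\ell := \min\{u_\ell, M\}$: on $\{u_\ell < M\}$ we have $\psi_\ell = u_\ell$, and on $\{u_\ell \ge M\}$ we have $\psi_\ell \equiv M$ so $\psi_\ell' = 0$ there while $F(0) \le \Lambda\cdot 0 = 0$. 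Actually $\psi_\ell \notin W_0^{1,q}$ unless we are careful near $\pm\ell$; instead use $\psi_\ell := \min\{u_\ell, M\} - (\text{correction}) $, or better, observe directly: replacing $u_\ell$ by $\min\{u_\ell,M\}$ on the ``plateau'' region strictly decreases $\int F(u_\ell')$ (it kills the gradient there) while changing $-\int \gamma u_\ell$ by at most $\gamma M \cdot 2\ell$ in the wrong direction — this is not yet a contradiction, so one needs the plateau region to be long.

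Therefore the real mechanism is a length estimate: the region $\{x : u_\ell(x) \ge u_\ell(x_\ell)/2\}$ must have length $\to \infty$. Indeed, on the complementary region $u_\ell$ climbs from $\le u_\ell(x_\ell)/2$ to $u_\ell(x_\ell)$, so by H\"older $\int_{\{u_\ell < u_\ell(x_\ell)/2\}} |u_\ell'|^q \ge c\, u_\ell(x_\ell)^q / (\text{its length})^{q-1}$; since $\int F(u_\ell') \le \Lambda \int |u_\ell'|^q$ and the total energy is $\le -c\ell^{1+1/(q-1)}$ with $\int |u_\ell'|^q$ bounded by $\gamma \int u_\ell$ by minimality (take competitor $0$: $E(u_\ell)\le 0 \Rightarrow \lambda\int|u_\ell'|^q \le \gamma \int u_\ell \le \gamma \|u_\ell\|_\infty \cdot 2\ell$), one deduces the length of $\{u_\ell \ge u_\ell(x_\ell)/2\}$ is $\ge c\ell$. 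Combined with $u_\ell(x_\ell)\to\infty$ and monotonicity, $\{u_\ell \ge M\}$ eventually contains $[a,b]$, giving pointwise blow-up; then $\int_a^b u_\ell^s \ge M^s(b-a) \to \infty$ for every $s>0$. The main obstacle I expect is making the interplay between the lower bound on $u_\ell(x_\ell)$, the length of the super-level set, and the energy balance fully quantitative and $\ell$-uniform — in particular pinning down that $u_\ell(x_\ell)$ and the good interval both grow like a positive power of $\ell$ simultaneously, using only \eqref{quadratic} and no convexity.
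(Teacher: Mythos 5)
Your first step (forcing $u_\ell(x_\ell)\to\infty$ by comparing with a tent competitor) is sound, but the localization step is where the theorem actually lives, and there your argument has a genuine gap. First, the length estimate you invoke is not established by the sketch: from $E_{\Omega_\ell}(u_\ell)\le -c\,\ell^{q/(q-1)}$ and $\int u_\ell \le m|S|+m\ell$ (where $m=u_\ell(x_\ell)$ and $S=\{u_\ell\ge m/2\}$) you only get $|S|\ge c'\ell^{q/(q-1)}/m-\ell$, and the only available upper bound on the maximum is $m\le C\ell^{q/(q-1)}$ (which is indeed its true order: in the model case $F(x)=x^2/2$ one has $u_\ell(0)=\gamma\ell^2/2$, so your lower bound $m\gtrsim\ell^{1/(q-1)}$ is far from the actual size). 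The constants $c'$ and $C$ depend on $\lambda,\Lambda,\gamma$ and pull in opposite directions, so $|S|\ge c\ell$ with a positive $c$ does not follow; moreover your H\"older estimate on the climbing region bounds the length of the \emph{sublevel} components from below, which is the wrong direction. Second, and decisively, even if $|S|\ge c\ell$ were proved, the conclusion that $\{u_\ell\ge M\}$ eventually contains $[a,b]$ is a non sequitur unless $c>1$: by Lemma \ref{mo} the set $S$ is an interval containing $x_\ell$, but nothing in your scheme controls where $x_\ell$ sits in $(-\ell,\ell)$, so $S$ could drift away from any fixed interval (e.g. $S\subset(b,\ell)$) while $u_\ell$ stays bounded on $[a,b]$. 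Since the statement is pointwise divergence at \emph{every} fixed $x$, pinning down the location of the region of largeness is exactly what must be proved, and your proposal never does it.

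The paper closes this gap with a different, more elementary comparison, which is the missing idea. Suppose $0\le u_\ell(z)\le K$ along a subsequence and, say, $x_\ell\le z$; then by Lemma \ref{mo} one has $u_\ell\le K$ on all of $[z,\ell)$, an interval of length tending to infinity. Define $\phi_\ell$ equal to $u_\ell$ on $(-\ell,z]$, equal to $u_\ell+\delta$ on $[z+1,\ell-1]$, and linear on the two connector intervals $(z,z+1)$ and $(\ell-1,\ell)$. Because $0\le u_\ell\le K$ on $[z,\ell)$, the connectors have slopes bounded by $K+\delta$, so by \eqref{quadratic} their contribution to the energy change is bounded by a constant independent of $\ell$; on $[z+1,\ell-1]$ the gradient is unchanged while the term $-\gamma\int u$ decreases by $\gamma\delta(\ell-z-2)$. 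Hence $E_{\Omega_\ell}(\phi_\ell)-E_{\Omega_\ell}(u_\ell)\le C-\gamma\delta(\ell-z-2)<0$ for large $\ell$, contradicting minimality (the case $x_\ell\ge z$ is symmetric), and then the integral statement follows from monotonicity exactly as in your last line. In short: boundedness at one point propagates, via unimodality, to a long half-interval, and lifting $u_\ell$ by a constant there wins linearly in $\ell$ at bounded transition cost; without this (or some substitute that locates the superlevel set), your approach does not reach the conclusion.
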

\begin{proof}
Let us argue by contradiction. Suppose that there exist  $K > 0$, $ z \in \R$ and a sequence $l_k \rightarrow \infty$ (which is again labeled by $\{\ell\}$), such that  $0 \leq u_{\ell}(z) \leq K$. Let us assume that  $x_{\ell} \leq z$ for all $\ell$. This implies from the previous lemma that $u_{\ell}(x) \leq K, \ x\geq z.$  Let $\delta > 0$, define the function 
$$
 \phi_{\ell}(x) = \left\{
 \begin{array}{ll}
   u_{\ell} \hspace{8.7mm}&\textrm{on} \ I_1:=(-\ell, z], \nonumber\\
  u_\ell(z) + (x-z)(u_{\ell}(z+1)-u_{\ell}(z)+\delta)   &\textrm{on}  \ I_2:=(z, z+1], \nonumber\\
 u_{\ell}(x)+ \delta  &\textrm{on} \ I_3:= [z+1,\ell-1), \nonumber\\
  (\ell-x)(u_\ell(\ell-1)+\delta) \ &\textrm{on} \  I_4:=(\ell-1,\ell].
\end{array}
\right.
$$
It is easy to see that 
\begin{equation}
\label{dd}
E_{I_1}(u_\ell) - E_{I_1}(\phi_\ell)= 0, \  E_{I_3}(\phi_\ell) - E_{I_3}(u_\ell)\leq - \gamma \delta(\ell- z -2).
\end{equation}
Now using the fact that $0\leq\phi_\ell \leq K + \delta $ on  $I_2\cup I_4,$ it is posible to find a constant $C > 0$ (independent of $\ell$) such that
\begin{equation}
\label{oo}
 E_{I_2 \cup I_4}(\phi_\ell) - E_{I_2\cup I_4}(u_\ell) \leq C.
\end{equation}
 Adding \eqref{dd} and \eqref{oo}, we obtain, when $\ell$ is sufficiently large 
 $$ E_{\Omega_\ell}(\phi_\ell) - E_{\Omega_\ell}(u_\ell) \leq C -\gamma\delta(\ell- z -2) < 0,$$
 which is a contradiction to the definition of $u_\ell$. 
 If $x_\ell \geq  z$ for some $\ell$ large defining $\phi_\ell$ analogously on the other side of $x_\ell$ we arrive to the same contradiction and the proof follows.\smallskip

Then, using the monotonicity property of $u_\ell$, one has
$$\int_a^b u_\ell^s \geq \min \{u_\ell(a), u_\ell(b)\}^s(b-a) \rightarrow \infty.$$
This completes the proof of the theorem.
\end{proof}
\bigskip

The situation can be different if $\gamma=0$ and if some coerciveness is added (compare to  \cite{ka2}). 
Suppose $\alpha, \beta > 0$, define  the energy 
functional, $E_{\Omega_\ell} : W_{\alpha,\beta}^{1,q}(\Omega_\ell) \rightarrow \R$  as 
$$E_{\Omega_\ell}(u) = \int_{-\ell}^\ell F(u') + |u|^q$$
where $ W_{\alpha,\beta}^{1,q}(\Omega_\ell) := \left \{ u \in W^{1,q}(\Omega_\ell) \ \big| \ u(-\ell) =  \alpha, u(\ell) = \beta \right\}$. Consider then the following minimization problem:
\begin{equation}\label{jjjjj}
E_{\Omega_\ell}(v_\ell) = \inf_{u \in W_{\alpha,\beta}^{1,q}(\Omega_\ell)} E_{\Omega_\ell}(u).
\end{equation} For existence and uniqueness of such function $v_\ell$, we  refer to \cite{te} but we just assume here existence of some minimizer $v_\ell$ that we consider next. 

\begin{lemma}\label{one}
It holds that
$$ 0 \leq v_\ell \leq \max\{\alpha, \beta\}  \  \ \textrm{and} \ \int_{-\ell}^{\ell} |v_\ell'|^q + v_\ell^q \leq C $$
where $C$ is a positive constant, independent of $\ell.$
\end{lemma}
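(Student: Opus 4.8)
The plan is to prove both statements by the direct method, comparing the minimizer $v_\ell$ with suitable competitors in $W_{\alpha,\beta}^{1,q}(\Omega_\ell)$; neither regularity nor convexity of $F$ is needed, only \eqref{quadratic} and the assumed existence of $v_\ell$.

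\emph{Pointwise bounds.} I would first note that \eqref{quadratic} forces $F(0)=0$, hence $F(\xi)\ge\lambda|\xi|^q\ge 0=F(0)$ for every $\xi$. Set $M:=\max\{\alpha,\beta\}$ and replace $v_\ell$ by its truncation $w_\ell:=\min\{M,\max\{0,v_\ell\}\}$ to $[0,M]$. Since $0\le\alpha,\beta\le M$, the Lipschitz map $t\mapsto\min\{M,\max\{0,t\}\}$ fixes the prescribed boundary values, so $w_\ell\in W_{\alpha,\beta}^{1,q}(\Omega_\ell)$; moreover $w_\ell'=v_\ell'$ a.e.\ on $\{0<v_\ell<M\}$ and $w_\ell'=0$ a.e.\ on the complement, and $|w_\ell|\le|v_\ell|$ pointwise. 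As $F(0)=0\le F(v_\ell')$, the gradient part of $E_{\Omega_\ell}$ does not increase under this replacement, while $\int_{-\ell}^{\ell}|u|^q$ strictly decreases as soon as $\{v_\ell<0\}\cup\{v_\ell>M\}$ has positive measure; hence in that case $E_{\Omega_\ell}(w_\ell)<E_{\Omega_\ell}(v_\ell)$, contradicting minimality. Thus $0\le v_\ell\le M$ a.e.

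\emph{Uniform energy bound.} The idea now is to exhibit a competitor whose energy stays bounded as $\ell\to\infty$. For $\ell\ge 1$, let $\psi_\ell\in W_{\alpha,\beta}^{1,q}(\Omega_\ell)$ be the continuous piecewise affine profile that interpolates affinely from $\alpha$ to $0$ on $[-\ell,-\ell+1]$, vanishes on $[-\ell+1,\ell-1]$, and interpolates affinely from $0$ to $\beta$ on $[\ell-1,\ell]$. On the two transition layers $\psi_\ell'$ equals the constant $-\alpha$, respectively $\beta$, and $0\le\psi_\ell\le M$ throughout, so by the upper bound in \eqref{quadratic}
$$E_{\Omega_\ell}(\psi_\ell)\le\Lambda(\alpha^q+\beta^q)+(\alpha^q+\beta^q)=(\Lambda+1)(\alpha^q+\beta^q)=:C_0 ,$$
which is independent of $\ell$.

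\emph{Conclusion, and the main point.} Minimality gives $E_{\Omega_\ell}(v_\ell)\le C_0$; on the other hand, the lower bound in \eqref{quadratic} together with $v_\ell\ge 0$ yields
$$E_{\Omega_\ell}(v_\ell)=\int_{-\ell}^{\ell}F(v_\ell')+v_\ell^q\ \ge\ \min\{\lambda,1\}\int_{-\ell}^{\ell}|v_\ell'|^q+v_\ell^q ,$$
so $\int_{-\ell}^{\ell}|v_\ell'|^q+v_\ell^q\le C_0/\min\{\lambda,1\}=:C$ for all $\ell\ge 1$, with $C$ independent of $\ell$. There is no genuine difficulty here; the only two things requiring attention are that each competitor must carry the prescribed boundary data (this is exactly where $\alpha,\beta>0$, i.e.\ $0\le\alpha,\beta\le M$, is used, so that truncating to $[0,M]$ does not disturb the traces), and that the transition layers of $\psi_\ell$ be kept of fixed length $1$ so that $E_{\Omega_\ell}(\psi_\ell)$ does not grow with $\ell$ --- consistently with the fact that such a bound can only be expected for $\ell$ bounded below, which is precisely the regime the statement addresses.
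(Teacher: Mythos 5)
Your proposal is correct and follows essentially the same route as the paper: truncation of $v_\ell$ to $[0,\max\{\alpha,\beta\}]$ as a competitor for the pointwise bounds (the paper does the two truncations $\max\{0,v_\ell\}$ and $\min\{v_\ell,\max\{\alpha,\beta\}\}$ separately), and the same piecewise affine competitor with unit transition layers, whose energy is $\ell$-independent, combined with the coercivity from \eqref{quadratic} for the integral bound. Your explicit remarks that $F(0)=0$, that the truncation preserves the boundary data, and the final step $E_{\Omega_\ell}(v_\ell)\ge\min\{\lambda,1\}\int_{-\ell}^{\ell}\bigl(|v_\ell'|^q+v_\ell^q\bigr)$ only make explicit what the paper leaves implicit.
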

\begin{proof} Let $| \{ v_\ell < 0 \}| > 0$. Define $w_\ell = \max\{0, v_\ell\}$. Then $E_{\Omega_\ell}(w_\ell ) < E_{\Omega_\ell}(v_\ell)$, 
which contradicts the definition of $v_\ell$.  The fact that $v_\ell \leq  \max \{ \alpha, \beta\}$ follows with a similar argument,
 with  the function $W_\ell = \min\{v_\ell, \max\{\alpha, \beta\} \}$ playing the role of $w_\ell$ in the previous part.\smallskip

Clearly the function 
$$
 \phi_\ell(x_1) = \left\{
 \begin{array}{ll}
  -\alpha x_1 + \alpha -\alpha\ell \hspace{8.7mm}&\textrm{on} \ (-\ell, -\ell + 1], \nonumber\\
  0  \hspace{16.9mm} &\textrm{on}  \ (-\ell+1, \ell-1), \nonumber\\
  \beta x_1 +\beta -\ell\beta  \hspace{9.8mm}&\textrm{on} \ [\ell-1,\ell), 
\end{array}
\right.
$$
is in the space $W_{\alpha,\beta}^{1,q}(\Omega_\ell)$. Then  it holds also that for all $\ell \geq 1$, $$E_{\Omega_\ell}(\phi_\ell) = E_{\Omega_1}(\phi_1).$$ Using $\phi_\ell$ as test function in
 \eqref{jjjjj}, one gets $$E_{\Omega_\ell}(v_\ell) \leq E_{\Omega_\ell}(\phi_\ell) = E_{\Omega_1}(\phi_1).$$  This proves the lemma.
\end{proof}
Next we state the main theorem of this section which  shows exponential rate of convergence of the solutions $v_\ell$ towards $0$. The method of proof depends on an iteration scheme.
\begin{theorem}
It holds that
$$\int_{-\frac{\ell}{2}}^{\frac{\ell}{2}} |v_\ell^{'}|^q +  v_\ell^q \leq C e^{-\alpha\ell},$$
where $C$ and 
$ \alpha$ are some positive constants, independent of $\ell$. 
\end{theorem}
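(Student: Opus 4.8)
The plan is to run an energy-decay iteration on dyadic-like sub-cylinders, exactly in the spirit of the Chipot–Yeressian argument but carried out purely variationally. For $t \in (0,\ell)$ write $\Omega_t = (-t,t)$ and set
$$
\mathcal{E}(t) := \int_{-t}^{t} |v_\ell'|^q + v_\ell^q .
$$
By Lemma~\ref{one} we already know $\mathcal{E}(\ell) \leq C$ with $C$ independent of $\ell$, and $0 \leq v_\ell \leq \max\{\alpha,\beta\}$. The goal is a differential/recursive inequality of the form $\mathcal{E}(t-1) \leq \theta\,\mathcal{E}(t)$ for some fixed $\theta \in (0,1)$, valid for $1 \leq t \leq \ell$; iterating this roughly $\ell/2$ times from $t=\ell$ down to $t=\ell/2$ gives $\mathcal{E}(\ell/2) \leq \theta^{\ell/2}\,\mathcal{E}(\ell) \leq C\,e^{-\alpha \ell}$.

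First I would derive the one-step decay. Fix $t$ and pick a cutoff $\chi$ that is $1$ on $(-(t-1),t-1)$, $0$ outside $(-t,t)$, piecewise linear, so $|\chi'|\leq 1$. The competitor is $\phi := (1-\chi)v_\ell$, which agrees with $v_\ell$ at $x=\pm\ell$ and hence lies in $W_{\alpha,\beta}^{1,q}(\Omega_\ell)$; it vanishes on $(-(t-1),t-1)$. Minimality of $v_\ell$ gives $E_{\Omega_\ell}(v_\ell) \leq E_{\Omega_\ell}(\phi)$, and since $v_\ell$ and $\phi$ coincide outside $\Omega_t$, this reduces to
$$
\int_{\Omega_t} F(v_\ell') + v_\ell^q \;\leq\; \int_{\Omega_t} F(\phi') + \phi^q .
$$
On the annular region $A_t := \Omega_t \setminus \Omega_{t-1}$ one has $\phi' = (1-\chi)v_\ell' - \chi' v_\ell$, so using the growth bound \eqref{quadratic} ($F(\xi)\leq \Lambda|\xi|^q$, and $F\geq \lambda|\xi|^q$ on the left) and $q\geq 2$ together with $|\chi'|\le 1$, the right-hand side is bounded by $C_1 \int_{A_t} |v_\ell'|^q + v_\ell^q = C_1\big(\mathcal{E}(t)-\mathcal{E}(t-1)\big)$, while the left-hand side is at least $\lambda\,\mathcal{E}(t-1)$ after noting $\phi^q$ contributes nothing on $\Omega_{t-1}$ and dropping the nonnegative term on $A_t$. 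This yields
$$
\lambda\,\mathcal{E}(t-1) \;\leq\; C_1\big(\mathcal{E}(t) - \mathcal{E}(t-1)\big),
$$
i.e. $\mathcal{E}(t-1) \leq \frac{C_1}{\lambda + C_1}\,\mathcal{E}(t) =: \theta\,\mathcal{E}(t)$ with $\theta \in (0,1)$ depending only on $\lambda,\Lambda,q$. Iterating from $t=\ell$ down in unit steps and invoking Lemma~\ref{one} for the initial bound gives $\mathcal{E}(\ell/2)\leq \theta^{\lfloor \ell/2\rfloor}\,C$, which is the claimed estimate with $\alpha = -\tfrac12\log\theta > 0$ (absorbing the off-by-one in the constant $C$).

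The main obstacle is handling the cross term $-\chi' v_\ell$ inside $F(\phi')$ cleanly: $F$ is only assumed convex with the growth bound \eqref{quadratic}, not differentiable, so one cannot expand $F(\phi')$ to first order. The fix is to avoid any linearization and instead bound $F(\phi') \le \Lambda|\phi'|^q \le \Lambda 2^{q-1}\big(|v_\ell'|^q + |\chi' v_\ell|^q\big)$ brutally on $A_t$, using only \eqref{quadratic}; convexity of $F$ is not even needed for this half of the argument (it is the coercive lower bound $F\ge\lambda|\xi|^q$ that does the real work on the left). A secondary point to check is that $\phi$ genuinely lies in the admissible class — it does, because $\chi\equiv 0$ near $\pm\ell$ for $t\le\ell$, so $\phi=v_\ell$ there — and that the constants produced are uniform in $\ell$, which is immediate since $\theta$ and $C$ depend only on $\lambda,\Lambda,q,\alpha,\beta$.
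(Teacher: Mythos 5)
Your argument is correct and is essentially the paper's own proof: the competitor $(1-\chi)v_\ell$ is exactly the paper's test function $\phi_{\ell_1}v_\ell$, the growth bound \eqref{quadratic} with the elementary inequality $(a+b)^q\le 2^{q-1}(a^q+b^q)$ gives the same one-step decay $\mathcal{E}(t-1)\le\theta\,\mathcal{E}(t)$, and the iteration plus Lemma \ref{one} yields the exponential rate. The only cosmetic differences are that the coercivity constant should be $\min\{\lambda,1\}$ rather than $\lambda$ (since the $v_\ell^q$ term carries coefficient $1$), and $q\ge 2$ is not actually needed for your bound on $F(\phi')$.
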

\begin{proof}
Suppose $\ell_1 \leq \ell$. Consider the  following test function $\phi_{\ell_1}$ defined on $\Omega_\ell = (-\ell,\ell)$ by 
$$
 \phi_{\ell_1}(x_1) = \left\{
 \begin{array}{ll}
1 \hspace{5.7mm}&\textrm{on} \ [-\ell, -\ell_1]\cup[\ell_1,\ell], \nonumber\\
-x_1  -\ell_1 + 1  \hspace{9.8mm}&\textrm{on} \ (-\ell_1,-\ell_1+1),\nonumber\\
  x_1  -\ell_1 + 1  \hspace{9.8mm}&\textrm{on} \ (\ell_1-1,-\ell_1),\nonumber\\
  0  \hspace{16.9mm} &\textrm{on}  \ [-\ell_1+1, \ell_1-1].
\end{array}
\right.
$$
The graph of $\phi_{\ell_1}$ is shown below.
 \begin{center}
\setlength{\unitlength}{1cm}
\begin{picture}(8,4)(-3.5,-1.25)
 \put(-5,0){\vector(1,0){10.4}}
 \put(0,-.4){$0$}
 \put(5.5,0){$x_1$}
 \put(0,0){\vector(0,1){2}}
 \put(3.5,0){\circle*{.1}}
 \put(2.5,0){\circle*{.1}}
 \put(-3.5,0){\circle*{.1}}
 \put(4.8,0){\circle*{.1}}
 \put(-4.8,0){\circle*{.1}}
 \put(3.4,-.4){$\ell_1$}
 \put(2.1,-.4){$\ell_1-1$}
 \put(-3,-.4){$-\ell_1+1$}
 \put(-3.9,-.4){$-\ell_1$}
 \put(4.5,-.4){$\ell$}
 \put(-5,-.4){$-\ell$}
 \put(-3.54,1.04){\line(-1,0){1.24}}
  \put(3.54,1.04){\line(1,0){1.24}}
 \put(2.5,0){\line(1,1){1.035}}
 \put(-2.5,0){\line(-1,1){1.035}}
 \put(-2.5,0){\circle*{.1}}
\put(-.5,-1.2){$\textrm{figure 1}$}
\end{picture}
\end{center}
Clearly we have $\phi_{\ell_1}v_\ell \in W_{\alpha,\beta}^{1,q}(\Omega_\ell)$. Hence from \eqref{jjjjj}, we get $E_{\Omega_\ell}(v_\ell) \leq E_{\Omega_\ell}(\phi_{\ell_1}v_\ell)$. Since $\phi_{\ell_1} = 1$ on the set $\Omega_\ell \setminus \Omega_{\ell_1}$, we have 
$$\int_{\Omega_{\ell_1}}F(v_\ell^{'}) + 	v_\ell^q \leq \int_{\Omega_{\ell_1}}F((\phi_{\ell_1}v_\ell)') + 	(\phi_{\ell_1}v_\ell)^q.$$
Setting $D_{\ell_1} := \Omega_{\ell_1}\setminus \Omega_{\ell_1-1}$ and observing that $\phi_{\ell_1}=  0 $ on $\Omega_{\ell_1-1}$, we have 
\begin{equation*}\label{ss}
\int_{\Omega_{\ell_1}}F(v_\ell^{'}) + 	v_\ell^q\leq \int_{D_{\ell_1}}F((\phi_{\ell_1}v_\ell)') + 	(\phi_{\ell_1}v_\ell)^q.
\end{equation*}
Using \eqref{quadratic}, convexity of the function $x^q$ and properties of $\phi_{\ell_1}$ we have for some constant $ C > 0$ (independent of $\ell$ )
\begin{eqnarray*} \min \{ \lambda, 1 \} \left( \int_{\Omega_{\ell_1}} |v_{\ell}'|^q + v_\ell^q \right) &\leq & \Lambda \int_{D_{\ell_1}} |(\phi_{\ell_1}v_\ell)'|^q +\int_{D_{\ell_1}} (\phi_{\ell_1}v_\ell)^q \nonumber\\
&\leq &   \Lambda \int_{D_{\ell_1}} |\phi_{\ell_1} v_{\ell}^{'} + v_\ell \phi_{\ell_1}^{'}|^q  + \int_{D_{\ell_1}} v_\ell^q \nonumber\\
&\leq & C\int_{D_{\ell_1}}  |v_{\ell}'	 |^q + v_\ell^q.
\end{eqnarray*}
The above inequality implies for  $\tilde{\Lambda} = \frac{C}{\min \{ \lambda, 1 \}}$
 \begin{equation*}
  \int_{\Omega_{\ell_1-1}}|v_{\ell}'|^q + v_\ell^q \leq \frac{\tilde{\Lambda}}{\tilde{\Lambda}+ 1}\left(\int_{\Omega_{\ell_1}}|v_{\ell}'|^q + v_\ell^q\right).
\end{equation*}
Now choosing 
$\ell_1 = \frac{\ell}{2} + 1 ,   \frac{\ell}{2}  +2, \  . \  . \  . \  ,    \frac{\ell}{2} + \textrm{[}\frac{\ell}{2}\textrm{]} $ and iterating the above formula, we obtain
\begin{equation*}
  \int_{\Omega_{\frac{\ell}{2}}}|v_{\ell}'|^q + v_\ell^q \leq \left(\frac{\tilde{\Lambda}}{1 +\tilde{\Lambda}}\right)^{\textrm{[}\frac{\ell}{2} \textrm{]}}\left( \int_{\Omega_{\frac{\ell}{2} + \textrm{[}\frac{\ell}{2}\textrm{]}}}|v_{\ell}'|^q + v_\ell^q \right).
\end{equation*}
since 
$\frac{\ell}{2}-1 <\textrm{[}\frac{\ell}{2} \textrm{]} \leq \frac{\ell}{2}$ there are positive constants $C_1$ and $C_2$  such that
\begin{equation*}
 \int_{\Omega_{\frac{\ell}{2}}} |v_{\ell}'|^q + v_\ell^q \leq 
C_1e^{-C_2 \ell}\left(\int_{\Omega_{\ell}}|v_{\ell}'|^q + v_\ell^q \right).
\end{equation*}
The theorem then follows from the last lemma.
\end{proof}

\section{Proof of Theorem \ref{main}}
We define the spaces 
$$W_{lat}^{1,q}(\Omega_\ell) := \left\{ u \in W^{1,q}(\Omega_\ell) \ \big| \  u = 0 \ \textrm{on} \ \ell\omega_1\times\del\omega_2\right\}$$
and 

\[
V^{1,q}(\Omega_{\ell})= \left\{ u\in W_{lat}^{1,q}(\Omega_{\ell}) \ \big| \  \begin{array}{c}

u(Y,X_2)
= u(Z,X_2),\\
a.e.  (Y,X_2),(Z,X_2 ) \in \del(\ell\omega_1)\times\omega_2
\end{array}\right\}.
\]
\smallskip

We consider the following minimization problem on $V^{1,q}(\Omega_\ell)$:
\begin{equation}
\label{w}
E_{\Omega_\ell}(w_\ell) = \min_{u \in V^{1,q}(\Omega_\ell)} E_{\Omega_\ell}(u),
\end{equation}
where $E_{\Omega_\ell}$ is defined as in \eqref{equaation}.
Existence and uniqueness of such $w_\ell$ again follows  as for the case of Problem \eqref{equaation}, assuming \eqref{quadratic} and $F$ strictly convex. 

\begin{theorem}\label{wl}
One has  $\forall \ell$, $w_\ell = u_\infty$, where 
$u_\infty$ is as in \eqref{equaation1}.
\end{theorem}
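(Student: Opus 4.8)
The plan is to show that $u_\infty$, regarded as a function on $\Omega_\ell$ that does not depend on $X_1$, is admissible in \eqref{w}, that its energy equals $|\ell\omega_1|\,E_{\omega_2}(u_\infty)$, and that no competitor in $V^{1,q}(\Omega_\ell)$ can do better; the uniqueness of the minimizer in \eqref{w} then forces $w_\ell=u_\infty$.

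First I would check that $u_\infty\in V^{1,q}(\Omega_\ell)$: since $u_\infty\in W_0^{1,q}(\omega_2)$, the extension $(X_1,X_2)\mapsto u_\infty(X_2)$ belongs to $W^{1,q}(\Omega_\ell)$, vanishes on $\ell\omega_1\times\del\omega_2$, and, being independent of $X_1$, is trivially constant on $\del(\ell\omega_1)\times\omega_2$. As $\grad u_\infty=(0,\grad_{X_2}u_\infty)$ and $f=f(X_2)$, Fubini gives
\[
E_{\Omega_\ell}(u_\infty)=\int_{\Omega_\ell}F(0,\grad_{X_2}u_\infty)-fu_\infty=|\ell\omega_1|\int_{\omega_2}\big(F(0,\grad_{X_2}u_\infty)-fu_\infty\big)=|\ell\omega_1|\,E_{\omega_2}(u_\infty).
\]

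The core step is the lower bound $E_{\Omega_\ell}(u)\ge|\ell\omega_1|\,E_{\omega_2}(u_\infty)$ for every $u\in V^{1,q}(\Omega_\ell)$. Given such $u$, I would introduce its $X_1$-average
\[
\bar u(X_2):=\frac{1}{|\ell\omega_1|}\int_{\ell\omega_1}u(X_1,X_2)\,dX_1,
\]
and note, via Fubini and the lateral boundary condition, that $\bar u\in W_0^{1,q}(\omega_2)$ with $\grad_{X_2}\bar u(X_2)=\frac{1}{|\ell\omega_1|}\int_{\ell\omega_1}\grad_{X_2}u\,dX_1$. The decisive observation is the cancellation of the $X_1$-derivative: for $p=1$ and a.e. $X_2$ the slice $u(\cdot,X_2)$ is absolutely continuous on the interval $\ell\omega_1$ and its two endpoint values coincide by the defining constraint of $V^{1,q}(\Omega_\ell)$, hence $\int_{\ell\omega_1}\del_{X_1}u(X_1,X_2)\,dX_1=0$ (for $p>1$ this is the divergence theorem applied to $\grad_{X_1}u$ of a function constant on $\del(\ell\omega_1)$). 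Thus the $X_1$-average of $\grad u(\cdot,X_2)$ is exactly $(0,\grad_{X_2}\bar u(X_2))$. Applying Jensen's inequality to the convex function $F$ in the $X_1$ variable and integrating over $\omega_2$,
\[
\int_{\Omega_\ell}F(\grad u)\ \ge\ |\ell\omega_1|\int_{\omega_2}F(0,\grad_{X_2}\bar u),
\]
while Fubini gives $\int_{\Omega_\ell}fu=|\ell\omega_1|\int_{\omega_2}f\bar u$. Hence $E_{\Omega_\ell}(u)\ge|\ell\omega_1|\,E_{\omega_2}(\bar u)\ge|\ell\omega_1|\,E_{\omega_2}(u_\infty)$ by \eqref{equaation1}.

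Putting the two displays together, $E_{\Omega_\ell}(w_\ell)\le E_{\Omega_\ell}(u_\infty)=|\ell\omega_1|\,E_{\omega_2}(u_\infty)\le E_{\Omega_\ell}(w_\ell)$, so $u_\infty$ is a minimizer in \eqref{w}; since \eqref{alpha} makes $F$ strictly convex, that minimizer is unique and therefore $w_\ell=u_\infty$. I expect the only real care to be needed in the function-space technicalities — that $\bar u\in W_0^{1,q}(\omega_2)$ (trace and Fubini), the justification of differentiating under the integral sign to identify $\grad_{X_2}\bar u$, and the one-dimensional fundamental theorem of calculus behind $\int_{\ell\omega_1}\del_{X_1}u\,dX_1=0$; once these are in place the remainder is just Jensen's inequality and Fubini's theorem, so the argument has no deeper obstacle.
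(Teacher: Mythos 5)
Your proposal is correct and follows essentially the same route as the paper: averaging a competitor $u\in V^{1,q}(\Omega_\ell)$ over $\ell\omega_1$, killing the $X_1$-gradient average via the divergence theorem and the constancy constraint on $\del(\ell\omega_1)\times\omega_2$, applying Jensen's inequality to $F$, and concluding by the identity $E_{\Omega_\ell}(u_\infty)=|\ell\omega_1|E_{\omega_2}(u_\infty)$ together with uniqueness of the minimizer. The only difference is presentational: you spell out the admissibility of $u_\infty$ and the equality case slightly more explicitly than the paper does.
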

\begin{proof}
  Let $u \in V^{1,q}(\Omega_\ell)$.
Set
$$v(X_2) = \fint_{\ell\omega_1} u(.,X_2) := \frac{1}{|\ell\omega_1| } \int_{\ell\omega_1} u(.,X_2)$$
where $|\ell\omega_1|$ denotes the measure of $\ell\omega_1$.
It is easy to see that $v \in W_0^{1,q}(\omega_2).$ Thus 
$$E_{\omega_2}(u_\infty) \leq E_{\omega_2}(v) = \int_{\omega_2} F\left(0,\grad_{X_2} v\right) -fv.$$
Now using the fact that $u(.,X_2)$ is constant on the set $\del(\ell\omega_1)$ and from  the divergence theorem, one has 
$$0 = \fint_{\ell\omega_1} \nabla_{X_1} u.$$
Also by differentiation under the integral, we get
$$\nabla_{X_2} v = \fint_{\ell\omega_1} \nabla_{X_2} u.$$
Therefore we have by Jensen's inequality 
\begin{eqnarray*}
E_{\omega_2}(u_\infty)\leq 
\int_{\omega_2} \left\{ F \left( \fint_{\ell\omega_1}\nabla_{X_1} u, \ \fint_{\ell\omega_1}\grad_{X_2} u \right)  \right\}   - \int_{\omega_2} f \left\{ \fint_{\ell\omega_1}
u \right\}  \\
\leq  \int_{\omega_2}\fint_{\ell\omega_1} \left\{F(\nabla u)-fu \right\} = \frac{E_{\Omega_\ell}(u)}{|\ell\omega_1|}.
\end{eqnarray*}
Clearly equality holds in the above inequality if $u = u_\infty$. Then the claim follows from the uniqueness of the minimizer.
\end{proof}

 Next we consider the case when $p=1$,  which means that the cylinder $\Omega_\ell$ becomes unbounded in one direction. Points in $\omega_1$  are now
 simply denoted by $x_1$. $\grad , \grad_{X_2}$ denote the gradients in $X$ and $X_2$ variables respectively. By 
$$|u|_{q,\Omega} = \left( \int_{\Omega} |u|^q \right)^{\frac{1}{q}},$$
we will denote the $L^q$ norm of $u$ on any domain $\Omega$.\smallskip
 
 We  need a few preliminaries before proceeding to the proof of Theorem \ref{main}. We know that a convex function  is locally Lipschitz continuous on $\R^n$, 
 the next lemma provides an estimate on the growth of the Lipschitz constant for convex functions satisfying  \eqref{quadratic}.

\begin{proposition}
\textup{\label{cor:F- lipsic}For convex function $F:\mathbb{R}^{n}\rightarrow\mathbb{\mathbb{R}}$
satisfying \eqref{quadratic}, for every $P,Q\in\mathbb{R}^{n},$ we
have
\begin{multline}
\left|F\left(Q\right)-F\left(P\right)\right|\leq 2^q\Lambda\max\left\{ \left|P\right|^{q-1},\left|Q\right|^{q-1}\right\} \left|Q-P\right| \\ \leq 2^q\Lambda 
(|P|^{q-1}+|Q|^{q-1})|P-Q|.\label{eq:Lipschitz estimate}
\end{multline}
}\end{proposition}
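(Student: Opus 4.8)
The plan is to prove the pointwise inequality by reducing to a one-dimensional situation along the segment joining $P$ and $Q$, and then using convexity together with the bound \eqref{quadratic} on a slightly enlarged point.

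First I would consider the line segment $[P,Q]$ and, for a convex function, estimate the difference $F(Q)-F(P)$ by the slope at one endpoint. Concretely, by convexity one has for any $t>0$
\[
F(Q)-F(P)\leq \frac{F\big(Q+t(Q-P)\big)-F(P)}{1+t}\cdot(1+t) \quad\text{... }
\]
more cleanly: since $F$ is convex, the one-dimensional function $g(s)=F\big(P+s(Q-P)\big)$ is convex on $\R$, so its difference quotients are monotone. Hence
\[
g(1)-g(0)\leq g(1+\delta)-g(1)\quad\text{divided appropriately},
\]
and choosing $\delta$ so that the outer point $R:=P+(1+\delta)(Q-P)$ still has controlled norm gives
\[
F(Q)-F(P)=g(1)-g(0)\leq \frac{1}{\delta}\big(g(1+\delta)-g(1)\big)\leq \frac{1}{\delta}\,F(R)\leq \frac{\Lambda}{\delta}|R|^q .
\]
Taking $\delta$ comparable to $1$ (say $\delta=1$, so $R=2Q-P$ with $|R|\leq 2|Q|+|P|\leq 3\max\{|P|,|Q|\}$) yields $F(Q)-F(P)\leq \Lambda\,|2Q-P|^q$, which after bounding $|2Q-P|^q$ is not yet of the stated form because it lacks the factor $|Q-P|$. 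To recover that factor one should instead let $\delta$ be large, proportional to $\max\{|P|,|Q|\}/|Q-P|$: then $|R|\leq |P|+(1+\delta)|Q-P|\lesssim \max\{|P|,|Q|\}$, while $1/\delta\lesssim |Q-P|/\max\{|P|,|Q|\}$, so
\[
F(Q)-F(P)\leq \frac{\Lambda}{\delta}|R|^q \lesssim \Lambda\,\frac{|Q-P|}{\max\{|P|,|Q|\}}\cdot \max\{|P|,|Q|\}^q = \Lambda\,\max\{|P|,|Q|\}^{q-1}|Q-P|.
\]
Swapping the roles of $P$ and $Q$ handles the other sign, giving the first inequality in \eqref{eq:Lipschitz estimate}; tracking constants carefully produces the factor $2^q$. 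The second inequality is immediate since $\max\{a,b\}\leq a+b$ for $a,b\geq 0$.

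The main obstacle is bookkeeping the constant: one has to choose the enlargement parameter $\delta$ precisely (it must be scale-invariant in $|Q-P|$ versus $\max\{|P|,|Q|\}$) and then verify that the enlarged point $R$ genuinely satisfies $|R|\leq 2\max\{|P|,|Q|\}$ or so, and combine the lower bound $F\geq \lambda|\cdot|^q$ (which is in fact not needed) with the upper bound to land the clean constant $2^q\Lambda$. A small degenerate case to check separately is $P=Q$ (trivial) and the case where one of $P,Q$ is $0$, where the formula still makes sense since $\max\{|P|^{q-1},|Q|^{q-1}\}$ is then just the $(q-1)$-power of the nonzero one. I expect the rest to be routine one-variable convexity estimates.
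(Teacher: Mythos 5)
Your proposal is correct and is essentially the paper's own argument: restrict $F$ to the segment through $P,Q$, use monotonicity of difference quotients of the convex one-variable function, extend beyond the far endpoint by a factor comparable to $|Q|/|Q-P|$ so the outer point has norm $\leq 2\max\{|P|,|Q|\}$, and apply the upper bound $F\leq\Lambda|\cdot|^q$ there, then swap $P$ and $Q$. One small correction: your parenthetical claim that the lower bound in \eqref{quadratic} is not needed is inaccurate, since the step where you drop $-g(1)=-F(Q)$ to get $\frac{1}{\delta}\bigl(g(1+\delta)-g(1)\bigr)\leq\frac{1}{\delta}F(R)$ uses $F\geq 0$, which is exactly what $\lambda|\cdot|^q\leq F$ provides (and without some such lower bound the stated Lipschitz estimate can fail).
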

\begin{proof}
First note that the second inequality holds trivially. For a convex function $f:\mathbb{R}\rightarrow\mathbb{R}$ and $a<b<c,$
we known that 
\begin{equation}
\frac{f\left(b\right)-f\left(a\right)}{b-a}\leq\frac{f\left(c\right)-f\left(b\right)}{c-b}.\label{eq:convex inequality}
\end{equation}
The function $f:\mathbb{R}\rightarrow\mathbb{R}$ defined as
\[
f\left(t\right)=F\left(\left(1-t\right)P+tQ\right),
\]
is clearly convex. Using the inequality \eqref{eq:convex inequality}
for $a=0$, $b=1$ and $c=t>1$ we have

\begin{equation*}
F\left(Q\right)-F\left(P\right)\leq\frac{F\left(\left(1-t\right)P+tQ\right)-F\left(Q\right)}{t-1}\leq\frac{\Lambda\left|\left(1-t\right)P+tQ\right|^{q}}{t-1}.
\end{equation*}
Now for $t=1+\frac{\left|Q\right|}{\left|Q-P\right|}$, we have 
\begin{eqnarray*}
F\left(Q\right)-F\left(P\right)&\leq &\frac{\Lambda\left|Q+\frac{\left|Q\right|}{\left|Q-P\right|}\left(Q-P\right)\right|^{q}}{\left|Q\right|}\left|Q-P\right|
\\
&\leq &\frac{\Lambda\left(\left|Q\right|+\frac{\left|Q\right|}{\left|Q-P\right|}\left|Q-P\right|\right)^{q}}{\left|Q\right|}\left|Q-P\right|\\
 &\leq & 2^q\Lambda\left|Q\right|^{q-1}\left|Q-P\right|.
\end{eqnarray*}
Now changing the roles of $P$ and $Q$, we have
\[
F\left(P\right)-F\left(Q\right)\leq 2^q\Lambda\left|P\right|^{q-1}\left|P-Q\right|,
\]
and \eqref{eq:Lipschitz estimate} follows.
\end{proof}
\smallskip

The following result is well known \cite{b}, we state it without  proof. 

\begin{lemma}\emph{[The Poincar\'e's Inequality for $\omega_2$]}
Let $0< s < t$  and $q \geq 1,$ then there exists $\lambda_1 = \lambda_1(q,\omega_2) > 0$ (independent of $s$ and $t$) such that 
$$|u|_{q,(s,t)\times \omega_2} \leq \lambda_1 ||\grad u||_{q,(s,t)\times \omega_2}, $$
for all $u \in W^{1,q}((s,t)\times \omega_2)$ with $u = 0$ on $(s,t)\times \del\omega_2$.
\end{lemma}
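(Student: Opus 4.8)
The plan is to reduce the estimate on the long cylinder $(s,t)\times\omega_2$ to the classical Poincar\'e inequality on the \emph{fixed} bounded cross–section $\omega_2$, carried out slice by slice in the $x_1$ variable. Since the slice estimate carries a constant depending only on $q$ and $\omega_2$, integrating back in $x_1$ reproduces that same constant, which is then manifestly independent of $s$ and $t$.

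First I would record the classical fact that, because $\omega_2\subset\R^{n-p}$ is open and bounded, there is a constant $\lambda_1=\lambda_1(q,\omega_2)>0$ with
$$|v|_{q,\omega_2}\leq \lambda_1\,|\grad_{X_2} v|_{q,\omega_2}\qquad\text{for every } v\in W_0^{1,q}(\omega_2).$$
For a self–contained argument one encloses $\omega_2$ in a box, extends $v\in C_c^\infty(\omega_2)$ by zero, writes $v(X_2)$ as the integral of one partial derivative along a segment joining $X_2$ to the boundary of the box, applies H\"older's inequality, and then passes to the limit by density; this even yields $\lambda_1$ proportional to the diameter of $\omega_2$.

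Next, for $u\in W^{1,q}((s,t)\times\omega_2)$ with $u=0$ on $(s,t)\times\del\omega_2$, I would use Fubini's theorem together with the ACL characterization of Sobolev functions: for a.e.\ $x_1\in(s,t)$ the slice $u(x_1,\cdot)$ belongs to $W^{1,q}(\omega_2)$, and its trace on $\del\omega_2$ is the restriction of the (vanishing) lateral trace of $u$, hence $u(x_1,\cdot)\in W_0^{1,q}(\omega_2)$ for a.e.\ $x_1$. Applying the cross–sectional Poincar\'e inequality on each such slice, and using $|\grad_{X_2} u|\leq|\grad u|$ pointwise, gives
$$\int_{\omega_2}|u(x_1,X_2)|^q\,dX_2\leq \lambda_1^q\int_{\omega_2}|\grad u(x_1,X_2)|^q\,dX_2 .$$
Integrating over $x_1\in(s,t)$ and invoking Fubini once more yields $|u|_{q,(s,t)\times\omega_2}^q\leq\lambda_1^q\,|\grad u|_{q,(s,t)\times\omega_2}^q$ with the same $\lambda_1=\lambda_1(q,\omega_2)$, which completes the argument.

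The only delicate point is the middle step: making rigorous that the vanishing lateral trace of $u$ descends to almost every slice $\{x_1\}\times\omega_2$. I expect this to be the main (and only genuine) obstacle, and I would handle it by density, observing that functions in $C^1(\overline{(s,t)\times\omega_2})$ vanishing in a neighbourhood of the lateral boundary are $W^{1,q}$–dense in the closed subspace of $W^{1,q}((s,t)\times\omega_2)$ of functions with zero lateral trace; the inequality is immediate for such smooth functions and is stable under $W^{1,q}$ limits. Everything else is routine.
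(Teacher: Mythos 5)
The paper offers no proof of this lemma at all --- it is quoted as ``well known'' from \cite{b} --- so there is nothing to match your argument against; judged on its own merits, your slice--and--integrate reduction to the Poincar\'e inequality on the fixed cross--section $\omega_2$ is correct and is essentially the standard proof. The classical argument in the cited monograph integrates one partial derivative $\del_{x_n}u$ along a direction in which $\omega_2$ is bounded, directly on the cylinder; this amounts to carrying out your one--dimensional cross--section estimate and the Fubini step simultaneously, and both routes give $\lambda_1$ proportional to the width of a slab containing $\omega_2$, hence manifestly independent of $s$ and $t$. The one point to treat with care is exactly the one you flag: since $\omega_2$ is only assumed open and bounded, ``$u=0$ on $(s,t)\times\del\omega_2$'' is not literally a trace condition, and the reading consistent with the space $W_{lat}^{1,q}$ used later in the paper is that $u$ belongs to the $W^{1,q}$--closure of functions vanishing in a neighbourhood of the lateral boundary. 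With that reading you do not need your density claim as a separate fact: you prove the inequality for such functions (whose slices, extended by zero in $X_2$, genuinely lie in $W_0^{1,q}(\omega_2)$, or more simply to which the one--dimensional integration applies directly) and then pass to the limit, both sides of the inequality being continuous in $W^{1,q}$. If instead you insist on the trace formulation, your density assertion for functions with vanishing lateral trace requires some boundary regularity of $\omega_2$ (Lipschitz suffices) and is itself a standard but non--trivial fact, so as written that middle step is the only place where your argument is a sketch rather than a complete proof; everything else is sound.
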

\smallskip

Now our main aim is to prove the Corollary \ref{za svaki skup} below, which we will use in the proof of
Theorem \ref{main}. 

For $s<t,$ $\rho_{s,t}=\rho_{s,t}(x_1)$ denotes a real, Lipschitz continuous function such that 
\begin{equation}\label{rho}
0\leq \rho_{s,t}\leq 1,~ |\rho'_{s,t}| \leq C,~\rho_{s,t}=1 \text{ on } (s,t),~ \rho_{s,t}=0 \text{ outside } (s-1,t+1)
\end{equation}
and $D_{s,t}$ denotes the set defined as 
$$
D_{s,t}=\left( \left((s-1,t+1)\times \omega_2\right) \backslash \left((s,t)\times \omega_2\right) \right) \cap \Omega_\ell.
$$

\begin{proposition}\label{new}
Let $p=1$.  There exists a constant (independent of $\ell$) such that for   $ -\ell < s < t < \ell$ we have
$$E_{(s,t)\times\omega_2}(u_\ell) \leq C |f|_{q',\omega_2}^\frac{q}{q-1}.$$ 
In particular it implies that for $\ell_0 < \ell,$
\begin{equation}\label{1}
E_{D_{\ell_{0}}}(u_{\ell}) \leq 2C|f|_{q',\omega_2}^\frac{q}{q-1}
\end{equation}
where $D_{\ell_0} :=  \Omega_\ell \cap \left( \Omega_{\ell_0+1} \setminus \Omega_{\ell_0}\right)$ and $u_\ell$ is as in \eqref{equaation}.
\end{proposition}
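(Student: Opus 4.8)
The plan is to combine the free bound $E_{\Omega_\ell}(u_\ell)\le E_{\Omega_\ell}(0)=0$ (note that \eqref{quadratic} forces $F(0)=0$) with a localized comparison inequality and a hole--filling iteration in the spirit of the one dimensional argument above. Throughout, $C$ denotes a constant depending only on $q,\lambda,\Lambda,\omega_2$ and on the fixed constant in \eqref{rho}, never on $\ell,s,t$, and $q'=q/(q-1)$ is the conjugate exponent. The first ingredient I would record is a thin--slice estimate: for any slice $S=(a,b)\times\omega_2\subset\Omega_\ell$ with $b-a\le 1$, write $\int_S F(\grad u_\ell)=E_S(u_\ell)+\int_S fu_\ell$; bounding $\int_S fu_\ell$ by H\"older in $X_2$ and the Poincar\'e inequality for $\omega_2$ (whose constant is uniform in $a,b$), then using \eqref{quadratic} and Young's inequality, one gets $\int_S|\grad u_\ell|^q\le \tfrac2\lambda E_S(u_\ell)+C|f|_{q',\omega_2}^{q'}$ (in particular $E_S(u_\ell)\ge -C|f|_{q',\omega_2}^{q'}$, though only the displayed bound is needed).

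\textbf{Localized minimality.} Fix $-\ell<s<t<\ell$ and test \eqref{equaation} against $w:=(1-\rho_{s,t})u_\ell$, which lies in $W_0^{1,q}(\Omega_\ell)$ since $u_\ell$ does and $\rho_{s,t}$ is bounded Lipschitz. As $w=u_\ell$ outside $(s-1,t+1)\times\omega_2$ while $w\equiv 0$ on $(s,t)\times\omega_2$ (and $F(0)=0$), splitting $E_{\Omega_\ell}$ over the three disjoint parts and using $E_{\Omega_\ell}(u_\ell)\le E_{\Omega_\ell}(w)$ gives $E_{(s,t)\times\omega_2}(u_\ell)\le E_{D_{s,t}}(w)-E_{D_{s,t}}(u_\ell)$. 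On $D_{s,t}$ one has $\grad w-\grad u_\ell=-\rho_{s,t}\grad u_\ell-u_\ell\grad\rho_{s,t}$, so $|\grad w|$ and $|\grad w-\grad u_\ell|$ are both $\le C(|\grad u_\ell|+|u_\ell|)$; inserting this into Proposition~\ref{cor:F- lipsic} bounds $|F(\grad w)-F(\grad u_\ell)|$ by $C(|\grad u_\ell|^q+|u_\ell|^q)$ on $D_{s,t}$. Estimating $\int_{D_{s,t}}f(w-u_\ell)$ and $\int_{D_{s,t}}fu_\ell$ by H\"older, applying the Poincar\'e inequality on the two slices of width $\le 1$ that make up $D_{s,t}$, and using Young's inequality once more, I would arrive at $E_{(s,t)\times\omega_2}(u_\ell)\le C\int_{D_{s,t}}|\grad u_\ell|^q+C|f|_{q',\omega_2}^{q'}$.

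\textbf{Contraction, iteration and conclusion.} Put $s'=\max(s-1,-\ell)$ and $t'=\min(t+1,\ell)$, so that $D_{s,t}$ together with $(s,t)\times\omega_2$ fills $(s',t')\times\omega_2$; hence by additivity of the energy $E_{D_{s,t}}(u_\ell)=E_{(s',t')\times\omega_2}(u_\ell)-E_{(s,t)\times\omega_2}(u_\ell)$. Substituting the thin--slice bound for $\int_{D_{s,t}}|\grad u_\ell|^q$ into the inequality above and solving the resulting linear inequality for $E_{(s,t)\times\omega_2}(u_\ell)$ produces some $\theta\in(0,1)$ (depending only on the constants above) with $E_{(s,t)\times\omega_2}(u_\ell)\le\theta\,E_{(s',t')\times\omega_2}(u_\ell)+C|f|_{q',\omega_2}^{q'}$. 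Since $s'<s$ and $t'>t$, iterating enlarges the interval by up to $1$ on each side at each step and reaches $(-\ell,\ell)$ after finitely many (at most $\lceil 2\ell\rceil$) steps; summing the geometric series and using $E_{\Omega_\ell}(u_\ell)\le 0$ yields $E_{(s,t)\times\omega_2}(u_\ell)\le\frac{C}{1-\theta}|f|_{q',\omega_2}^{q'}$, which is the first assertion. Finally $D_{\ell_0}$, when nonempty, is the union of at most two slices of width $\le 1$, namely $(-\ell_0-1,-\ell_0)\times\omega_2$ and $(\ell_0,\ell_0+1)\times\omega_2$ intersected with $\Omega_\ell$; applying the first assertion to each and adding gives \eqref{1}.

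\textbf{Main obstacle.} The delicate point is obtaining the contraction in the last step. Trying to bound $\int_{D_{s,t}}|\grad u_\ell|^q$ by an absolute constant directly would be circular, since the constant $C$ produced in the localization step is larger than $1$, so naively iterating over slices does not close. Rewriting $E_{D_{s,t}}(u_\ell)$ as the difference of the energies on the nested intervals $(s,t)\subset(s',t')$ is precisely the hole--filling device that converts the localized inequality into a genuine contraction with factor $\theta<1$; one must also verify — which is the case — that the Poincar\'e constant for $\omega_2$ and the Lipschitz constant in \eqref{rho} are uniform in the position of the slices.
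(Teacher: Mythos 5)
Your argument is correct, but it closes the estimate by a different mechanism than the paper. Both proofs begin the same way: compare $u_\ell$ with $(1-\rho_{s,t})u_\ell$, cancel the energies outside $(s-1,t+1)\times\omega_2$, use $F(0)=0$ on the middle block, and reduce the problem to controlling $E_{D_{s,t}}((1-\rho_{s,t})u_\ell)-E_{D_{s,t}}(u_\ell)$ by $\int_{D_{s,t}}|\grad u_\ell|^q$ plus $|f|_{q',\omega_2}^{q'}$ via the Poincar\'e inequality for $\omega_2$ and Young's inequality (the paper does this using \eqref{quadratic} alone; your appeal to Proposition \ref{cor:F- lipsic} is correct but not needed). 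The divergence is in how the gradient term on $D_{s,t}$ is handled. The paper never sets up a contraction: it argues by a dichotomy. If $E_{D_{s-m,t+m}}(u_\ell)>0$ for every $m$, then the energies of the expanding blocks are monotone and $E_{(s,t)\times\omega_2}(u_\ell)\le E_{\Omega_\ell}(u_\ell)\le E_{\Omega_\ell}(0)=0$; otherwise, at the first $m$ with $E_{D_{s-m,t+m}}(u_\ell)\le 0$, the inequality $\lambda\int_{D}|\grad u_\ell|^q\le\int_D fu_\ell$ plus Poincar\'e gives the gradient bound on that annulus for free, and the localized comparison applied at level $m$, together with the monotonicity up to level $m$, finishes. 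You instead prove a thin-slice estimate valid on every slice, rewrite $E_{D_{s,t}}(u_\ell)=E_{(s',t')\times\omega_2}(u_\ell)-E_{(s,t)\times\omega_2}(u_\ell)$ (hole filling), obtain $E_{(s,t)\times\omega_2}(u_\ell)\le\theta\,E_{(s',t')\times\omega_2}(u_\ell)+C|f|_{q',\omega_2}^{q'}$ with $\theta<1$, and iterate out to $(-\ell,\ell)$, where $E_{\Omega_\ell}(u_\ell)\le 0$, summing a geometric series; since $\theta<1$ the bound is uniform in the number of steps, hence in $\ell$. Both routes give a constant independent of $\ell,s,t$: the paper's stopping-time dichotomy is shorter and avoids any series, while your hole-filling is the more systematic and standard device and does not require singling out the sign of the annular energies. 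One cosmetic point: to deduce \eqref{1} when $\ell_0+1\ge\ell$ the slices of $D_{\ell_0}$ end at $x_1=\pm\ell$, where the first assertion is stated only for $t<\ell$; either note that your argument runs verbatim with $t=\ell$ (the corresponding half of $D_{s,t}$ being empty) or pass to the limit $t\uparrow\ell$, exactly as the paper implicitly does.
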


\begin{proof} First we claim that if $E_{D_{s,t}}(u_\ell)\leq 0$ then 

\begin{equation}\label{ds}
E_{(s,t)\times \omega_2}(u_\ell) \leq C(\lambda, \lambda_1, \Lambda)|f|_{q',\omega_2}^{\frac{q}{q-1}}.
\end{equation} 
Indeed if $E_{D_{s,t}}(u_\ell) \leq 0$  then one has
\begin{multline*}
\lambda \int_{D_{s,t}} |\grad u_\ell|^q   \leq  \int_{D_{s,t}}F(\nabla u_\ell)  \leq \int_{D_{s,t}}fu_\ell \leq |f|_{q',D_{s,t}}|u_\ell|_{q, D_{s,t}} \\
\leq \lambda_1 |f|_{q',D_{s,t}}||\nabla u_\ell||_{q, D_{s,t}}
\end{multline*}
by the Poincar\'e inequality for $\omega_2$. It follows that
\begin{equation}\label{inn}
|| \nabla u_\ell||_{q, D_{s,t}}^q \leq \left(\frac{\lambda_1}{\lambda}\right)^{\frac{q}{q-1}}|f|_{q',D_{s,t}}^{\frac{q}{q-1}}.
\end{equation}
Since $(1- \rho_{s,t})u_\ell \in W_0^{1,q}(\Omega_\ell)$ we have 
$$ E_{\Omega_\ell}(u_\ell) \leq  E_{\Omega_\ell}((1-\rho_{s,t})u_\ell), $$ 
that is to say
\begin{multline*}
E_{(s,t)\times \omega_2}(u_\ell) + E_{D_{s,t}}(u_\ell) + E_{\Omega_\ell \backslash (s-1,t+1)\times \omega_2 }(u_\ell)  \leq  E_{(s,t)\times \omega_2}((1-\rho_{s,t})u_\ell) \\+ E_{D_{s,t}}((1-\rho_{s,t})u_\ell)  +
E_{{\Omega_\ell \backslash (s-1,t+1)\times \omega_2 }} ((1-\rho_{s,t})u_\ell).~~~~~~~~~~~~~~~~~~
\end{multline*}
Since $\rho_{s,t} = 0$ outside $(s-1,t+1)$ $\rho_{s,t} = 1$ on $(s,t)$ we get 
\begin{eqnarray*}
E_{(s,t)\times \omega_2}(u_\ell) &\leq & E_{D_{s,t}}((1-\rho_{s,t})u_\ell) - E_{D_{s,t}}(u_\ell) \nonumber 
 \\
 &= & \int_{D_{s,t}}F(\nabla(1-\rho_{s,t})u_\ell) - F(\nabla u_\ell) + f\rho_{s,t} u_\ell \nonumber \\
 &\leq & \Lambda \left\{ ||\nabla (1-\rho_{s,t})u_\ell ||_{q, D_{s,t}}^q + ||\nabla u_\ell ||_{q, D_{s,t}}^q \right\} + |f|_{q',D_{s,t}}|u_\ell|_{q,D_{s,t}} \nonumber \\
 &\leq & \Lambda \left\{ ||(1-\rho_{s,t})\nabla u_\ell - u_\ell \nabla \rho_{s,t} ||_{q, D_{s,t}}^q + ||\nabla u_\ell ||_{q, D_{s,t}}^q \right\} \\&+&\lambda_1 |f|_{q', D_{s,t}}||\nabla u_\ell 
 ||_{q, D_{s,t}}.
\end{eqnarray*}
Now using the triangle inequality and Poincar\'e inequality for $\omega_2$ we obtain for some constants $K(q)$ and  $C(\lambda, \lambda_1, \Lambda, q)$,
\begin{eqnarray*}
E_{(s,t)\times \omega_2}(u_\ell) &\leq & \Lambda  K\left\{ |u_\ell|_{q, D_{s,t}}^q +  ||\nabla u_\ell||_{q, D_{s,t}}^q \right\}+\lambda_1 |f|_{q', D_{s,t}}|\nabla u_\ell|_{q, D_{s,t}} \nonumber \\
 &\leq & C(\lambda, \lambda_1, \Lambda, q) |f|_{q',\omega_2}^{\frac{q}{q-1}}
\end{eqnarray*}
by \eqref{inn}. This completes the claim i.e. shows \eqref{ds}.
\smallskip

Next we can complete the proof of the theorem. Indeed, let $m$ be the first nonnegative integer such that $D_{s-m, t+m}$ is non empty and
$$E_{D_{s-m,t+m}}(u_\ell) \leq 0. $$
If there is no such integer, 
then 
$$E_{(s,t)\times \omega_2}(u_\ell) \leq E_{(s-1,t+1)\times \omega_2}(u_\ell) \leq . . . \leq E_{\Omega_\ell}(u_\ell) \leq 0.$$
In the last inequality we used the fact that 
$$E_{\Omega_\ell}(u_\ell) \leq E_{\Omega_\ell}(0) = 0.$$
If such an $m$ exists then, by the first part of this proof,
$$E_{(s,t)\times \omega_2}(u_\ell) \leq E_{(s-m,t+m)\times \omega_2}(u_\ell) \leq C |f|_{q',\omega_2}^\frac{q}{q-1}.$$
This proves the proposition.
\end{proof}
\smallskip

\begin{corollary}\emph{[Gradient Estimate]}
\label{za svaki skup}For every $\ell_{0}\leq \ell$ we have, 
\begin{equation*}
\int_{D_{\ell_{0}}}\left|\nabla u_{\ell}\right|^{q}\leq \tilde{C},
\end{equation*}
for some constant $\tilde{C} = \tilde{C}(\lambda, \lambda_1, \Lambda, f)$  \end{corollary}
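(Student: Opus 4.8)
The plan is to derive Corollary~\ref{za svaki skup} directly from Proposition~\ref{new} by peeling off the bad (negative-energy) part of the functional on $D_{\ell_0}$. Recall that by \eqref{quadratic} we always have $\lambda\int_{D_{\ell_0}}|\grad u_\ell|^q \le \int_{D_{\ell_0}}F(\grad u_\ell)$, so it suffices to bound $\int_{D_{\ell_0}}F(\grad u_\ell)$ from above. The obstruction is that $E_{D_{\ell_0}}(u_\ell) = \int_{D_{\ell_0}}F(\grad u_\ell) - fu_\ell$ controls $\int F(\grad u_\ell)$ only up to the term $\int_{D_{\ell_0}} f u_\ell$, which is not a priori bounded on its own.

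The first step is to write $\int_{D_{\ell_0}}F(\grad u_\ell) = E_{D_{\ell_0}}(u_\ell) + \int_{D_{\ell_0}} f u_\ell$ and invoke \eqref{1} from Proposition~\ref{new} to bound the first summand by $2C|f|_{q',\omega_2}^{q/(q-1)}$. For the second summand I would use H\"older's inequality in the $X_2$ variable together with the Poincar\'e inequality for $\omega_2$ (the Lemma stated just before), since $u_\ell$ vanishes on the lateral boundary: on the slab $D_{\ell_0}$ (which is contained in a slab of the form $(s,s+1)\times\omega_2$) this gives $\int_{D_{\ell_0}} f u_\ell \le |f|_{q',D_{\ell_0}}\,|u_\ell|_{q,D_{\ell_0}} \le \lambda_1 |f|_{q',\omega_2}\,\|\grad u_\ell\|_{q,D_{\ell_0}}$. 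Combining, and setting $I := \int_{D_{\ell_0}}|\grad u_\ell|^q$, we obtain an inequality of the form
\begin{equation*}
\lambda I \le 2C|f|_{q',\omega_2}^{q/(q-1)} + \lambda_1 |f|_{q',\omega_2}\, I^{1/q}.
\end{equation*}

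The final step is to observe that this is a self-improving inequality in $I$: a term growing like $I^{1/q}$ with $q\ge 2 > 1$ is sublinear, so either $\lambda_1|f|_{q',\omega_2} I^{1/q} \le \frac{\lambda}{2} I$, forcing $\frac{\lambda}{2} I \le 2C|f|_{q',\omega_2}^{q/(q-1)}$, or else $\frac{\lambda}{2} I < \lambda_1 |f|_{q',\omega_2} I^{1/q}$, which rearranges to $I^{(q-1)/q} < \frac{2\lambda_1}{\lambda}|f|_{q',\omega_2}$, i.e. $I < (2\lambda_1/\lambda)^{q/(q-1)}|f|_{q',\omega_2}^{q/(q-1)}$. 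In either case $I \le \tilde C$ with $\tilde C = \tilde C(\lambda,\lambda_1,\Lambda,f)$ as claimed; alternatively one can absorb via Young's inequality $\lambda_1|f|_{q',\omega_2} I^{1/q} \le \frac{\lambda}{2} I + C'(\lambda,\lambda_1)|f|_{q',\omega_2}^{q/(q-1)}$ and conclude in one line. The only mild subtlety — and the step to be careful with — is checking that $D_{\ell_0} = \Omega_\ell \cap (\Omega_{\ell_0+1}\setminus\Omega_{\ell_0})$ sits inside a single unit slab $(\ell_0,\ell_0+1)\times\omega_2$ (up to orientation/sign of $\ell_0$, using $p=1$), so that the Poincar\'e constant $\lambda_1$ is the one for $\omega_2$ and is independent of $\ell_0$ and $\ell$; this is exactly the hypothesis under which the cited Poincar\'e lemma applies.
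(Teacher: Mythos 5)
Your proposal is correct and follows essentially the same route as the paper: bound $\lambda\int_{D_{\ell_0}}|\grad u_\ell|^q$ by $E_{D_{\ell_0}}(u_\ell)+\int_{D_{\ell_0}}fu_\ell$, use \eqref{1} for the first term, H\"older plus the cross-sectional Poincar\'e inequality for the second, and absorb the resulting sublinear gradient term (the paper does this via Young's inequality with a small parameter $\eps$, which is exactly your one-line alternative). Your worry about $D_{\ell_0}$ fitting in one slab is harmless, since the Poincar\'e inequality acts only in the $X_2$ variable and so applies to each of the two components of $D_{\ell_0}$ separately with the same constant $\lambda_1$.
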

\begin{proof}
Applying H\"older's, Young's inequality  and \eqref{quadratic}, to \eqref{1}, we obtain
\begin{eqnarray*}
\lambda\int_{D_{\ell_0}}|\grad u_\ell|^q 
&\leq & \int_{D_{\ell_0}} F(\grad u_\ell) \\
&\leq & 2C|f|_{q',\omega_2}^{q'} + \int_{D_{\ell_0}}fu_\ell \\
&\leq & 2C|f|_{q',\omega_2}^{q'} +\frac{1}{\eps^{q'}q'} |f|_{q',D_{\ell_0}}^{q'} +\frac{\eps^q}{q} |u_\ell|_{q,D_{\ell_0}}^q \\
&\leq & 2C|f|_{q',\omega_2}^{q'} +\frac{1}{\eps^{q'}q'} |f|_{q',D_{\ell_0}}^{q'} +\frac{\lambda_1^q\eps^q}{q} ||\grad u_\ell||_{q,D_{\ell_0}}^q. 
\end{eqnarray*}
This implies that 
$$(\lambda - \frac{\lambda_1^q\eps^q}{q} )\int_{D_{\ell_0}}|\grad u_\ell|^q  \leq 
2C|f|_{q',\omega_2}^{q'} +\frac{1}{\eps^{q'}q'} |f|_{q',D_{\ell_0}}^{q'} .$$
The corollary follows after choosing $\eps $  such that $\lambda - \frac{\lambda_1^q\eps^q}{q} =\frac{1}{2}$.
\end{proof}
\bigskip


\textbf{Proof of Theorem \ref{main}} \hspace*{3mm} For some $0\leq \ell_0 \leq \ell-1,$ we define the  functions
\begin{equation}
\begin{array}{c}
\hat{\psi}_{\ell_0, \ell}= u_{\ell}+\frac{\rho_{\ell_0}}{2}\left(u_{\infty}-u_{\ell}\right) \  \textrm{and}
\\
\check{\psi}_{\ell_0,\ell} = u_{\infty}+\frac{\rho_{\ell_0}}{2}\left(u_{\ell}-u_{\infty}\right),
\end{array}\label{eq:Prosenjitlinear combination}
\end{equation}
where $\rho_{\ell_0}$ is defined before in \eqref{rho} with $s =-\ell_0$ and $t = \ell_0$. Because $\hat{\psi}_{\ell_0,\ell}\in W_{0}^{1,q}\left(\Omega_{\ell}\right),$
we have from \eqref{equaation}
\begin{equation}
\begin{array}{c}
E_{\Omega_{\ell}}\left(u_{\ell}\right)\leq E_{\Omega_{\ell}}\left(\hat{\psi}_{\ell_0,\ell}\right)\end{array}.\label{eq:Prosenjit1}
\end{equation}
Also since $\check{\psi}_{\ell_0,\ell}\in V^{1,q}(\Omega_{\ell}),$ from Theorem \ref{wl}
\begin{equation}
E_{\Omega_{\ell}}\left(u_{\infty}\right)\leq E_{\Omega_{\ell}}\left(\check{\psi}_{\ell_0,\ell}\right).\label{eq:Prosenjit2}
\end{equation}
From \eqref{eq:Prosenjitlinear combination} it is easy to see that
\begin{equation}\label{same}
\hat{\psi}_{\ell_0,\ell} + \check{\psi}_{\ell_0,\ell} = u_{\ell} + u_{\infty}.
\end{equation}

Adding \eqref{eq:Prosenjit1} and \eqref{eq:Prosenjit2}, we have
\begin{equation*}
E_{\Omega_{\ell}}\left(u_{\ell}\right)+ E_{\Omega_{\ell}}\left(u_{\infty}\right)\leq E_{\Omega_{\ell}}(\grad \hat{\psi}_{\ell_0,\ell})+E_{\Omega_{\ell}}\left(\grad\check{\psi}_{\ell_0,\ell}\right)
\end{equation*}
which implies 
$$\int_{\Omega_\ell} F(\grad u_\ell) + F(\grad u_\infty) \leq \int_{\Omega_\ell} F(\hat{\psi}_{\ell_0,\ell}) + F(\check{\psi}_{\ell_0,\ell}).$$
Using the fact that 
$
u_{\ell} = \hat{\psi}_{\ell_0,\ell} \mbox{ and } u_{\infty} = \check{\psi}_{\ell_0,\ell}$ on $\Omega_{\ell}\setminus\Omega_{\ell_0 +1},$ 
we get 
\begin{equation*}\label{hj}
\int_{\Omega_{\ell_0 +1}} F\left(\nabla u_{\ell}\right)+F\left(\nabla u_{\infty}\right)\leq\int_{\Omega_{\ell_0 +1}}F(\nabla\hat{\psi}_{\ell_0,\ell})+F\left(\nabla\check{\psi}_{\ell_0,\ell}\right).
\end{equation*}
Note that  $\hat{\psi}_{\ell_0,\ell}=\check{\psi}_{\ell_0,\ell}=\frac{u_{\ell} + u_{\infty}}{2}$ on $\Omega_{\ell_0},$ which gives 
\begin{multline}\label{pp}
\int_{\Omega_{\ell_0}}F\left(\nabla u_{\ell}\right)+F\left(\nabla u_{\infty}\right)-2F\left(\frac{\nabla u_{\ell}+\nabla u_{\infty}}{2}\right)
\\
\leq\int_{D_{\ell_0}}F(\nabla\hat{\psi}_{\ell_0,\ell})+ F\left(\nabla\check{\psi}_{\ell_0,\ell}\right)-F\left(\nabla u_{\ell}\right)-F\left(\nabla u_{\infty}\right) := I.
\end{multline}
Using the convexity of $F$ and \eqref{same}, we have 
\begin{multline*}
I = \int_{D_{\ell_0}}F(\nabla\hat{\psi}_{\ell_0,\ell}) + F\left(\nabla\check{\psi}_{\ell_0,\ell}\right)- F\left(\nabla u_{\ell}\right)- F\left(\nabla u_{\infty}\right)\\
\leq \int_{D_{\ell_0}} F(\nabla\hat{\psi}_{\ell_0,\ell}) + F\left(\nabla\check{\psi}_{\ell_0,\ell}\right)- 2F\left(\frac{\nabla u_{\ell}+\nabla u_{\infty}}{2}\right)
\\
=\int_{D_{\ell_0}} F(\nabla\hat{\psi}_{\ell_0,\ell})+F\left(\nabla\check{\psi}_{\ell_0,\ell}\right)-
2F\left(\frac{\nabla\hat{\psi}_{\ell_0,\ell}+\nabla\check{\psi}_{\ell_0,\ell}}{2}\right).
\end{multline*}
Now using Proposition \ref{cor:F- lipsic}, we obtain for some constant $C=C(q,\Lambda) > 0,$
\begin{multline}\label{mu}
|I| \leq  C\int_{D_{\ell_0}}\left\{\big|\nabla\hat{\psi}_{\ell_0,\ell}\big|^{q-1}+ \big|\nabla(\hat{\psi}_{\ell_0,\ell}+ \check{\psi}_{\ell_0,\ell})\big|^{q-1}+
\left|\nabla\check{\psi}_{\ell_0,\ell}\right|^{q-1}\right\}\big|\nabla(\hat{\psi}_{\ell_0,\ell}-\check{\psi}_{\ell_0,\ell})\big|.
\end{multline}
Since $q \geq 2$, using the monotonicity of the function $|X|^{q-1}$, we have for $a, ~b>0$
$$
(a+b)^{q-1} \leq (2 \max\{a,b\})^{q-1}=2^{q-1}\max \{a,b\}^{q-1}\leq 2^{q-1}(a^{q-1}+b^{q-1}).
$$
Now from \eqref{mu} we get,
$$I \leq C\int_{D_{\ell_0}} \left(\big|\nabla\hat{\psi}_{\ell_0,\ell}\big|^{q-1}+\big|\nabla\check{\psi}_{\ell_0,\ell}\big|^{q-1}\right)\big|\nabla(\hat{\psi}_{\ell_0,\ell}
-\check{\psi}_{\ell_0,\ell})\big|.$$
Then using H\"older's inequality, we obtain
\begin{equation}
\label{seo}
I \leq  C\left( ||\nabla \hat{\psi}_{\ell_0,\ell}||_{q,D_{\ell_0}}^{\frac{q}{q'}}
+ ||\nabla \check{\psi}_{\ell_0,\ell}||_{q,D_{\ell_0}}^{\frac{q}{q'}} \right)||\nabla(\hat{\psi}_{\ell_0,\ell}-\check{\psi}_{\ell_0,\ell})||_{q,D_{\ell_0}}.
\end{equation}

Since $F$ is uniformly convex of power $q$-type, we get
\begin{eqnarray}
\label{kk}
\alpha\int_{\Omega_{\ell_0}}|\grad (u_\ell - u_\infty)|^q \leq 
\int_{\Omega_{\ell_0}} F\left(\nabla u_{\ell}\right)+F\left(\nabla u_{\infty}\right)\\ \nonumber -2F\left(\frac{\nabla u_{\ell} 
+\nabla u_{\infty}}{2}\right).
\end{eqnarray}
Combining \eqref{pp},\eqref{seo} and \eqref{kk}, one gets 
\begin{multline}\label{lm}
\alpha \int_{\Omega_{\ell_0}}|\grad (u_\ell - u_\infty)|^q \\ \leq  C\left( ||\nabla \hat{\psi}_{\ell_0,\ell}||_{q,D_{\ell_0}}^{\frac{q}{q'}}
+ ||\nabla \check{\psi}_{\ell_0,\ell}||_{q,D_{\ell_0}}^{\frac{q}{q'}} \right)||\nabla(\hat{\psi}_{\ell_0,\ell}-\check{\psi}_{\ell_0,\ell})||_{q,D_{\ell_0}}.
\end{multline}
We estimate each integral on the right hand side of the above inequality. One has for some constant $C> 0$,
\begin{multline*}
\int_{D_{\ell_0}} |\grad(\hat{\psi}_{\ell_0,\ell}-\check{\psi}_{\ell_0,\ell})|^q
= \int_{D_{\ell_0}} |\grad \{(1-\rho_{\ell_0})(u_\ell- u_\infty)\}|^q \\ \leq 
2^{q-1}\int_{D_{\ell_0}}(1- \rho_{\ell_0})^q |\grad (u_\ell - u_\infty)|^q + 2^{q-1} \int_{D_{\ell_0}} (u_\ell -u_\infty)^q
|\grad \rho_{\ell_0}|^q.
\end{multline*}

Using Poincar\'e's inequality and the properties of $\rho_{\ell_0}$,  one has for some $K=K(\lambda_1)$
\begin{equation}\label{sx}
\int_{D_{\ell_0}}|\grad(\hat{\psi}_{\ell_0,\ell}-\check{\psi}_{\ell_0,\ell})|^q\leq K \int_{D_{\ell_0}}|\grad (u_\ell-u_\infty)|^q.
\end{equation}
\smallskip

Applying Corollary \ref{za svaki skup}, one can estimate the terms $||\grad \hat{\psi}_{\ell_0,\ell}||_{q,D_{\ell_0}}$ and 
$ ||\grad \hat{\psi}_{\ell_0,\ell}||_{q,D_{\ell_0}}$ similarly, to obtain for some constant $K_1 = K_1(f, \lambda_1)$
\begin{equation*}
\int_{D_{\ell_0}}|\grad \hat{\psi}_{\ell_0,\ell}|^q, \int_{D_{\ell_0}}|\grad \check{\psi}_{\ell_0,\ell}|^q \leq K_1.
\end{equation*}
For some other constant $M$ (which depends only on $K, K_1$), \eqref{lm}   becomes
\begin{equation}
\label{kj}
\int_{\Omega_{\ell_0}}|\grad(u_\ell -u_\infty)|^q \leq M \left( \int_{D_{\ell_0}}|\grad(u_\ell -u_\infty)|^q \right)^{\frac{1}{	q}}.
\end{equation} 
Applying Corollary \ref{za svaki skup}, we get for some other constant $M_1$
\begin{equation}
\label{j}
\int_{\Omega_{\ell_0}}|\grad(u_\ell -u_\infty)|^q \leq M_1.
\end{equation}
Denoting $$a_m = \int_{\Omega_{\frac{\ell}{2}+ m}} |\grad (u_\ell -u_\infty)|^q$$
by  \eqref{kj} we have for $m = 0, \ . \ .  \ . \ ,  [\frac{\ell}{2}]-1$
\begin{equation}\label{vv}
a_m \leq M (a_{m+1} -a_m)^{\frac{1}{q}}.
\end{equation}
One may see that there exists $t_0 > 1$ such that for $1 < t < t_0$ we have 
$$\frac{1}{t^{q-1}} \leq 1 - \frac{1}{2}(q-1)(t-1).$$
It follows  that by taking $t= \frac{a_{m+1}}{a_m}$ we have that if  $\frac{a_{m+1}}{a_m} < t_0$ then
\begin{equation}\label{ddd}
(q-1)\frac{a_{m+1}-a_m}{a_m^q} \leq 2(a_m^{1-q} -a_{m+1}^{1-q}).
\end{equation}
Thus in the case $\frac{a_{m+1}}{a_m} < t_0$ by \eqref{vv} and \eqref{ddd} we have 

\begin{equation}
\label{saa}
M^{-q} \leq \frac{a_{m+1}-a_m}{a_m^q} \leq \frac{2}{q-1}(a_m^{1-q} -a_{m+1}^{1-q}).
\end{equation} 
In the case  $\frac{a_{m+1}}{a_m} > t_0$, using the bound  $a_m < M_1$ we compute 
$$a_m^{1-q} -a_{m+1}^{1-q} \geq a_m^{1-q}(1-t_0^{1-q}) = M_1(1-t_0^{1-q}).$$
Thus we have 
$$a_m^{1-q} -a_{m+1}^{1-q} \geq \min\left\{ M^{-q}\frac{q-1}{2}, M_1^{1-q}(1-t_0^{1-q}) \right\} = C_1.$$
Summing this inequality  for $m = 0, .  .  .  , [\frac{\ell}{2}]-1$
we obtain 
$$a_0^{1-q}-a_{[\frac{\ell}{2}]}^{1-q} \geq C_1[\frac{\ell}{2}].$$
Therefore it follows that 
$$\int_{\frac{\ell}{2}} |\grad (u_\ell - u_\infty)|^q = a_0 \leq \frac{1}{\left( C_1[\frac{\ell}{2}]\right)^{\frac{1}{q-1}}} \leq \frac{C_2}{\ell^{\frac{1}{q-1}}}.$$

\section{Proof of Theorem \ref{reg} and Some additional Results}

We do not restrict ourself to the assumption that $p=1$. 
 We assume that $\omega_1 $ is open and bounded subset of $\R^p$, which is star shaped around the origin. 

\begin{lemma}\label{klm}
Under the assumptions $q=2$, \eqref{alpha} and \eqref{alpha2}, one has $F\in C^1(\R^n)$.
\end{lemma}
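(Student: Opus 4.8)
The plan is to read off, from \eqref{alpha} and \eqref{alpha2}, a two‑sided bound on the symmetric second difference of $F$, and then to exploit the resulting \emph{semiconcavity} of the (convex) function $F$ to force one‑sided directional derivatives to be linear at every point.

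First I would substitute $\xi = x+h$, $\eta = x-h$ (so that $\tfrac{\xi+\eta}{2}=x$ and $|\xi-\eta|^{2} = 4|h|^{2}$) into \eqref{alpha} and into \eqref{alpha2}. This yields, for every $x,h\in\R^{n}$,
\[
4\alpha|h|^{2} \;\le\; F(x+h)+F(x-h)-2F(x) \;\le\; 4\beta|h|^{2}.
\]
The left inequality (already with $\alpha\ge 0$) says that $F$ is midpoint convex; combined with the standing growth bound \eqref{quadratic}, which makes $F$ bounded on bounded sets, a classical theorem of Sierpi\'nski type upgrades this to genuine convexity, and hence continuity, of $F$ on $\R^{n}$. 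From here on $F$ is thus a finite continuous convex function obeying the right‑hand (semiconcavity) estimate above.

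Next, fix $x_{0}\in\R^{n}$ and a direction $v$. By convexity the difference quotient $t\mapsto \tfrac{F(x_{0}+tv)-F(x_{0})}{t}$ is nondecreasing, so the one‑sided directional derivative $F'(x_{0};v):=\lim_{t\downarrow 0}\tfrac{F(x_{0}+tv)-F(x_{0})}{t}$ exists, and sublinearity of $v\mapsto F'(x_{0};v)$ gives $F'(x_{0};v)+F'(x_{0};-v)\ge 0$. On the other hand the semiconcavity estimate gives
\[
\frac{F(x_{0}+tv)-F(x_{0})}{t}+\frac{F(x_{0}-tv)-F(x_{0})}{t}=\frac{F(x_{0}+tv)+F(x_{0}-tv)-2F(x_{0})}{t}\le 4\beta t|v|^{2},
\]
and letting $t\downarrow 0$ yields $F'(x_{0};v)+F'(x_{0};-v)\le 0$. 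Hence $F'(x_{0};-v)=-F'(x_{0};v)$, so the positively homogeneous, subadditive map $v\mapsto F'(x_{0};v)$ is odd, therefore linear; this means $F$ is Gateaux — and, $F$ being convex, Fr\'echet — differentiable at $x_{0}$. Since $x_{0}$ was arbitrary, $F$ is differentiable everywhere on $\R^{n}$, and a finite convex function differentiable at every point of an open set is continuously differentiable there. Therefore $F\in C^{1}(\R^{n})$.

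The only delicate point is the passage from midpoint convexity to genuine convexity and continuity of $F$; this is precisely where the local boundedness coming from \eqref{quadratic} is used, and I expect it to be the step needing the most care in writing. Everything else — the substitution producing the two‑sided second‑difference bound, and the squeeze on the one‑sided directional derivatives — is routine. (Equivalently one could argue through $G:=2\beta|\cdot|^{2}-F$, which the same computation together with the parallelogram identity shows to be convex, and then apply the Moreau--Rockafellar sum rule to $F+G=2\beta|\cdot|^{2}\in C^{\infty}(\R^{n})$ to deduce that $\partial F(x_{0})$ is a singleton for every $x_{0}$; but the directional‑derivative argument above is self‑contained.)
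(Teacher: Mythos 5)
Your proof is correct, and it takes a genuinely different route from the paper's. Both arguments start from the same observation — with $q=2$, \eqref{alpha} and \eqref{alpha2} combine into the two-sided second-difference bound $4\alpha|h|^{2}\le F(x+h)+F(x-h)-2F(x)\le 4\beta|h|^{2}$ — but they exploit it differently. The paper reads this bound as a distributional statement, $0\le\langle\Delta F,\phi\rangle\le 4n\beta\int\phi$ for $\phi\ge 0$, concludes $\Delta F\in L^{\infty}(\R^n)$, and then invokes potential theory (the Newtonian potential estimates of Lemma 4.1 in Gilbarg--Trudinger) to get $F\in C^{1}$; this is short once the elliptic machinery is granted and in fact yields a bit more regularity of the gradient than plain continuity. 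You instead stay entirely within convex analysis: the upper (semiconcavity) half of the bound forces $F'(x_{0};v)+F'(x_{0};-v)\le 0$, which together with sublinearity makes the directional derivative linear, hence $F$ is Gateaux (thus, being convex and finite-dimensional, Fr\'echet) differentiable everywhere, and a convex function differentiable on an open set is automatically $C^{1}$. Your approach is self-contained and more elementary, needing only standard facts about convex functions; the extra care you take in upgrading midpoint convexity to convexity via local boundedness (Bernstein--Doetsch rather than Sierpi\'nski, strictly speaking, since you use boundedness rather than measurability) is a legitimate point the paper glosses over, as its definition of uniform convexity only imposes the midpoint inequality \eqref{alpha}. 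Your alternative remark via $G=2\beta|\cdot|^{2}-F$ and the subdifferential sum rule is also sound.
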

\begin{proof}
Using the assumptions  \eqref{alpha} and \eqref{alpha2}, it is easy  to show approximating $\lap \phi$ by its discrete expression that 
$$0 \leq  \langle  \lap F ,\phi \rangle \leq 4n\beta \int_{\R^n} \phi, \hspace{3mm} \forall \phi \in C_c^\infty(\R^n), \  \phi \geq 0.$$
This implies that $\lap F$  belongs to the dual space of $L^1(\R^n)$ and hence to  $L^\infty(\R^n)$. Now the lemma follows from the estimates for Newtonian potential (see Lemma 4.1 of \cite{gil}).
\end{proof}\smallskip

Now we proceed to the proof of Theorem \ref{reg}. \smallskip

\textbf{Proof of Theorem \ref{reg}} \   We have $E_{\Omega_\ell}(u_\ell) \leq E_{\Omega_\ell}(0)= 0,$ which implies that 
$$\lambda\int_{\Omega_{\ell}} |\grad u_\ell|^2 \leq \int_{\Omega_\ell}F(\grad u_\ell) \leq  \int_{\Omega_{\ell}} fu_\ell.$$
Using H\"older's inequality and then Poincar\'e's inequality, we have for some constant $C > 0$,
\begin{equation}\label{we}\int_{\Omega_\ell} |\grad u_\ell|^2 \leq C\ell^p.
\end{equation}

For $\ell_0 < \ell - 1$, choose $\rho_{\ell_0} = \rho_{\ell_0}(X_1)$ satisfying $\rho_{\ell_0} = 1$ on $\ell_0\omega_1$ and $0$ outside $\Omega_{\ell_0+1}$.
Also assume $|\grad_{X_1}\rho_{\ell_0}| \leq C$, for some $C > 0$. 

\smallskip

Then one can proceed exactly  as in the proof of Theorem \ref{main}, until  the inequality
\begin{equation*}\label{tj}
\int_{\Omega_{\ell_0 +1}} F\left(\nabla u_{\ell}\right)+F\left(\nabla u_{\infty}\right)\leq\int_{\Omega_{\ell_0 +1}}F(\nabla\hat{\psi}_{\ell_0,\ell})+ 
F\left(\nabla \check{\psi}_{\ell_0,\ell}\right).
\end{equation*}
Using the convexity of $ q$-type  and \eqref{alpha2} we arrive to 
\begin{equation*}\alpha\int_{\Omega_{\ell_0 +1}} |\grad (u_\ell -u_\infty)|^2 \leq \beta \int_{\Omega_{\ell_0 +1}} |\grad (\hat{\psi}_{\ell_0,\ell}-\check{\psi}_{\ell_0,\ell})|^2.
\end{equation*}
Now since $\hat{\psi}_{\ell_0,\ell} - \check{\psi}_{\ell_0,\ell} = (1-\rho_{\ell_0})(u_\ell-u_\infty)$ and $\rho_{\ell_0} = 1$ on $\Omega_{\ell_0}$, this implies that 
\begin{equation*}
\alpha\int_{\Omega_{\ell_0 +1}} |\grad (u_\ell -u_\infty)|^2 \leq \beta \int_{D_{\ell_0}} |\grad (\hat{\psi}_{\ell_0,\ell}-\check{\psi}_{\ell_0,\ell})|^2.
\end{equation*}
Using  \eqref{sx} one obtains for some constant $C= C(\lambda, \lambda_1, \Lambda, \alpha, \beta)$
\begin{equation*}
\int_{\Omega_{\ell_0 }} |\grad (u_\ell -u_\infty)|^2\leq C \int_{D_{\ell_0}} |\grad (u_\ell -u_\infty)|^2,
\end{equation*}
which is equivalent to
\begin{equation}
\label{sd}
\int_{\Omega_{\ell_0 }} |\grad (u_\ell -u_\infty)|^2 \leq \frac{C}{C+1}\int_{\Omega_{\ell_0 +1 }} |\grad (u_\ell -u_\infty)|^2.
\end{equation}
Choosing $\ell_0 = \frac{\ell}{2} + m$ for $m = 0, 1, . . . , [\frac{\ell}{2}] -1$ and iterating \eqref{sd}, we get
$$\int_{\Omega_{\frac{\ell}{2}}} |\grad (u_\ell -u_\infty)|^2 \leq \left(\frac{C}{C+1}\right)^{ [\frac{\ell}{2}]-1}\int_{\Omega_{\ell}} |\grad (u_\ell -u_\infty)|^2. $$
Finally setting  $r = \frac{C}{C+1} < 1$ and from \eqref{we}, we have 
$$\int_{\Omega_{\frac{\ell}{2}}} |\grad (u_\ell -u_\infty)|^2 \leq C\ell^p e^{([\frac{\ell}{2}]-1)\log r}. $$
Since $\log r < 0$, the theorem follows.
\bigskip

  The next proposition gives a sufficient criterion for \eqref{alpha2} to hold true. More precisely we have :

\begin{proposition}\label{lip}
If $F \in C^1(\R^n)$ is such that for some $\alpha,~\beta >0$
$$
\alpha |\xi -\eta|^q \leq(\nabla F(\xi) -\nabla F(\eta))\cdot (\xi -\eta ) \hspace{4mm} \forall \xi, \eta \in \R^n
$$
or
$$
(\nabla F(\xi) -\nabla F(\eta))\cdot ( \xi -\eta ) \leq  \beta |\xi -\eta|^2, \hspace{4mm} \forall \xi, \eta \in \R^n
$$
is satisfied, then the condition \eqref{alpha} or respectively \eqref{alpha2}  holds.

\end{proposition}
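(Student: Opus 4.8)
The plan is to write the convexity defect $F(\xi)+F(\eta)-2F\!\left(\frac{\xi+\eta}{2}\right)$ as an integral of $\nabla F$ along the two half-segments from the midpoint to $\xi$ and to $\eta$, and then to substitute the hypothesis on $\nabla F$ directly into that integral.

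First I would fix $\xi\neq\eta$, set $m=\frac{\xi+\eta}{2}$ and $h=\frac{\xi-\eta}{2}$, so that $\xi=m+h$ and $\eta=m-h$. Since $F\in C^1(\R^n)$, applying the fundamental theorem of calculus to $t\mapsto F(m+th)$ and to $t\mapsto F(m-th)$ on $[0,1]$ and adding (the two copies of $F(m)$ combine to $-2F(m)$) gives
\[
F(\xi)+F(\eta)-2F\!\left(\frac{\xi+\eta}{2}\right)=\int_0^1\bigl(\nabla F(m+th)-\nabla F(m-th)\bigr)\cdot h\,dt .
\]

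Next, for $t\in(0,1]$ I would put $\xi_t=m+th$ and $\eta_t=m-th$, so that $\xi_t-\eta_t=t(\xi-\eta)$ and hence $h=\frac{1}{2t}(\xi_t-\eta_t)$; the integrand then equals $\frac{1}{2t}\bigl(\nabla F(\xi_t)-\nabla F(\eta_t)\bigr)\cdot(\xi_t-\eta_t)$. Under the first hypothesis this is bounded below by $\frac{\alpha}{2t}\lvert\xi_t-\eta_t\rvert^{q}=\frac{\alpha}{2}\,t^{\,q-1}\lvert\xi-\eta\rvert^{q}$, and integrating over $(0,1)$ yields $\frac{\alpha}{2q}\lvert\xi-\eta\rvert^{q}$, i.e. \eqref{alpha} (with $\frac{\alpha}{2q}$ in place of the constant $\alpha$). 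Under the second hypothesis (with $q=2$ in the target inequality) the same integrand is bounded above by $\frac{\beta}{2t}\lvert\xi_t-\eta_t\rvert^{2}=\frac{\beta}{2}\,t\,\lvert\xi-\eta\rvert^{2}$, and integration gives $\frac{\beta}{4}\lvert\xi-\eta\rvert^{2}$, i.e. \eqref{alpha2} (with $\frac{\beta}{4}$ in place of $\beta$).

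I do not expect any genuine obstacle; the only point requiring a word of care is the factor $\frac1t$, singular at $t=0$. This is harmless: the map $t\mapsto\bigl(\nabla F(m+th)-\nabla F(m-th)\bigr)\cdot h$ is continuous on $[0,1]$ (it is $\nabla F$ precomposed with affine maps), so the displayed integral is a genuine Riemann integral; the pointwise estimates $\frac{\alpha}{2}t^{q-1}\lvert\xi-\eta\rvert^{q}$ and $\frac{\beta}{2}t\lvert\xi-\eta\rvert^{2}$ hold for every $t\in(0,1)$ and, since $q\ge 1$, are integrable there, so passing to the integral is legitimate. The case $\xi=\eta$ is trivial.
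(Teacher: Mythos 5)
Your argument is correct and is essentially the paper's own proof: both express the convexity defect $F(\xi)+F(\eta)-2F\bigl(\tfrac{\xi+\eta}{2}\bigr)$ as an integral of $\bigl(\nabla F(\xi_t)-\nabla F(\eta_t)\bigr)\cdot(\xi_t-\eta_t)$ along the segment, insert the monotonicity hypothesis pointwise, and integrate to obtain the constants $\tfrac{\alpha}{2q}$ and $\tfrac{\beta}{4}$. Your parametrization from the midpoint outward is just the change of variable $t\mapsto 1-t$ applied to the paper's parametrization from the endpoints toward the midpoint, so there is no substantive difference.
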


\begin{proof}
One has
$$F(\xi) - F\big(\frac{\xi+ \eta}{2}\big) = -\int_0^1 \frac{d}{dt} F(\xi + t(\frac{\eta-\xi}{2}))dt$$
$$\hskip 2 cm=-\frac{1}{2}\int_0^1\nabla F(\xi + t(\frac{\eta-\xi}{2}))\cdot (\eta-\xi)dt.$$
Exchanging the roles of $\xi$ and $\eta$ we get 
$$F(\eta) - F\big(\frac{\xi+ \eta}{2}\big) = -\int_0^1 \frac{d}{dt} F(\eta + t(\frac{\xi-\eta}{2}))dt$$
$$\hskip 2 cm =-\frac{1}{2}\int_0^1\nabla F(\eta+ t(\frac{\xi-\eta}{2}))\cdot (\xi-\eta)dt.$$
Then adding the two equalities above  we obtain 
$$F(\xi) + F(\eta) - 2F\big(\frac{\xi+ \eta}{2}\big) \hskip 7 cm$$ 
$$
\hskip 2 cm= \frac{1}{2} \int_0^1(\nabla F(\eta+ t(\frac{\xi-\eta}{2})) -\nabla F(\xi + t(\frac{\eta-\xi}{2})))\cdot (\eta-\xi)dt\hskip 2 cm$$
Noting that 
$$
[\eta+ t(\frac{\xi-\eta}{2})] -[\xi + t(\frac{\eta-\xi}{2})]=(1-t)(\eta-\xi)
$$
we obtain from our assumptions
$$ \frac{\alpha}{2} \int_0^1 (1-t)^{q-1}|\eta-\xi|^qdt \leq F(\xi) + F(\eta) - 2F\big(\frac{\xi+ \eta}{2}\big)$$
or  $$F(\xi) + F(\eta) - 2F\big(\frac{\xi+ \eta}{2}\big) \leq \frac{\beta}{2} \int_0^1 (1-t)^{q-1}|\eta-\xi|^qdt$$
i.e.
$$ \frac{\alpha}{2q}|\eta-\xi|^q \leq F(\xi) + F(\eta) - 2F\big(\frac{\xi+ \eta}{2}\big)$$or

$$ F(\xi) + F(\eta) - 2F\big(\frac{\xi+ \eta}{2}\big) \leq \frac{\beta}{2q} |\eta-\xi|^q \hspace{3mm} \textrm{for}  \ q=2.$$
This completes the proof of the proposition.
\end{proof}

\bigskip

Our main result Theorem \ref{main} works only in the case when $p=1
$. Next  we  provide some partial result (Theorem \ref{p > 1 }) in the case when $0 < p < n$ and $\Omega_\ell = (-\ell , \ell)^p \times \omega_2$ are hypercubes.

\begin{lemma}\label{poin}\emph{[A pointwise estimate]}
If $f \geq 0$,  then $0 \leq u_\ell \leq u_\infty$ \ a.e. for all $\ell.$
\end{lemma}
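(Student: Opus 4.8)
The plan is to obtain the lower bound $0\le u_\ell$ by the usual truncation argument and the upper bound $u_\ell\le u_\infty$ by a lattice comparison, exploiting the two variational principles at our disposal: $u_\ell$ minimizes $E_{\Omega_\ell}$ over $W_0^{1,q}(\Omega_\ell)$, while $u_\infty$, viewed as an element of $W^{1,q}(\Omega_\ell)$ by extending it constantly in the $X_1$ variable, minimizes $E_{\Omega_\ell}$ over $V^{1,q}(\Omega_\ell)$ by Theorem~\ref{wl}.

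First I would settle nonnegativity. If $|\{u_\ell<0\}|>0$, put $u_\ell^{+}=\max\{u_\ell,0\}$, so that $\nabla u_\ell^{+}=\nabla u_\ell\,\mathbf{1}_{\{u_\ell>0\}}$ a.e.; since \eqref{quadratic} forces $F(0)=0\le F(\nabla u_\ell)$ one gets $\int_{\Omega_\ell}F(\nabla u_\ell^{+})\le\int_{\Omega_\ell}F(\nabla u_\ell)$, and since $f\ge 0$ and $u_\ell\le 0$ on $\{u_\ell\le 0\}$ one gets $-\int_{\Omega_\ell}fu_\ell^{+}\le-\int_{\Omega_\ell}fu_\ell$; hence $E_{\Omega_\ell}(u_\ell^{+})\le E_{\Omega_\ell}(u_\ell)$, so $u_\ell^{+}$ is also a minimizer and, by uniqueness, $u_\ell^{+}=u_\ell$. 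Running the identical computation for the problem \eqref{equaation1} on $\omega_2$ shows $u_\infty\ge 0$; this fact is needed in the next step.

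Next I would introduce $v:=\min\{u_\ell,u_\infty\}$ and $w:=\max\{u_\ell,u_\infty\}$ and check that they are admissible. On the lateral boundary $\ell\omega_1\times\partial\omega_2$ both $u_\ell$ and $u_\infty$ have zero trace; on the remaining part $\partial(\ell\omega_1)\times\omega_2$ of $\partial\Omega_\ell$ one has $u_\ell=0$ and $u_\infty\ge 0$, so there $v=0$ and $w=u_\infty(X_2)$. Therefore $v\in W_0^{1,q}(\Omega_\ell)$, while $w\in V^{1,q}(\Omega_\ell)$ because its trace on $\partial(\ell\omega_1)\times\omega_2$ is independent of the $X_1$ variable. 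Using $\nabla u_\ell=\nabla u_\infty$ a.e.\ on $\{u_\ell=u_\infty\}$, a case distinction over $\{u_\ell\le u_\infty\}$ and $\{u_\ell>u_\infty\}$ gives the pointwise identities $F(\nabla v)+F(\nabla w)=F(\nabla u_\ell)+F(\nabla u_\infty)$ a.e.\ and $v+w=u_\ell+u_\infty$, hence
\[
E_{\Omega_\ell}(v)+E_{\Omega_\ell}(w)=E_{\Omega_\ell}(u_\ell)+E_{\Omega_\ell}(u_\infty).
\]
Since $v\in W_0^{1,q}(\Omega_\ell)$ yields $E_{\Omega_\ell}(u_\ell)\le E_{\Omega_\ell}(v)$, and $w\in V^{1,q}(\Omega_\ell)$ together with Theorem~\ref{wl} yields $E_{\Omega_\ell}(u_\infty)\le E_{\Omega_\ell}(w)$, adding these two inequalities and comparing with the displayed identity forces equality in both; in particular $v$ minimizes $E_{\Omega_\ell}$ over $W_0^{1,q}(\Omega_\ell)$, so $v=u_\ell$ by uniqueness, i.e.\ $u_\ell\le u_\infty$. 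Together with the previous step this gives $0\le u_\ell\le u_\infty$ a.e.

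The delicate points, which I would write out carefully, are (i) the verification that $v$ and $w$ have the boundary traces claimed — it is exactly here that $u_\infty\ge 0$ enters, so that $\min\{u_\ell,u_\infty\}$ still vanishes on $\partial(\ell\omega_1)\times\omega_2$ and $\max\{u_\ell,u_\infty\}$ reduces to the $X_1$-independent function $u_\infty$ there — and (ii) the measure-theoretic bookkeeping for $\nabla v$ and $\nabla w$ on the coincidence set. I expect (i) to be the only real obstacle; (ii) is standard.
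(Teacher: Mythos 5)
Your proposal is correct and follows essentially the same route as the paper: the paper's comparison functions $v_\ell=u_\ell-(u_\ell-u_\infty)^+$ and $w_\ell=u_\infty+(u_\ell-u_\infty)^+$ are exactly your $\min\{u_\ell,u_\infty\}$ and $\max\{u_\ell,u_\infty\}$, tested in \eqref{equaation} and (via Theorem~\ref{wl}) in \eqref{w}, with uniqueness of the minimizers closing the argument. The only difference is presentational: you add the two minimality inequalities against the exact identity $E_{\Omega_\ell}(v)+E_{\Omega_\ell}(w)=E_{\Omega_\ell}(u_\ell)+E_{\Omega_\ell}(u_\infty)$, whereas the paper first proves $E_{\mathcal{A}_\ell}(u_\ell)\leq E_{\mathcal{A}_\ell}(u_\infty)$ by contradiction and then concludes with $w_\ell$.
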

\begin{proof}
Fix  $\ell > 0$.  First we claim that $u_\ell, u_\infty \geq 0$.  We will prove  the claim only for $u_\ell$, since the proof for  $u_\infty$ is identical. Define the function $w_\ell = \max\{0,u_\ell\}$. Clearly $w_\ell \in W_0^{1,q}(\Omega_\ell)$ is non negative and since $f\geq 0$, we have
$$E_{\Omega_\ell}(w_\ell)\leq E_{\Omega_\ell}(u_\ell).$$
The claim then follows from the uniqueness of $u_\ell$.
Set $$\mathcal{A}_\ell := \{ X \in  \Omega_\ell ~|~u_\ell(X) > u_\infty (X_2) \}.$$
We claim that  
\begin{equation*}
\label{eq444}
E_{\mathcal{A}_\ell}(u_\ell) \leq E_{\mathcal{A}_\ell}(u_\infty).
\end{equation*} 
Indeed, if not, setting $v_\ell=u_\ell -(u_\ell-u_\infty)^+$ one has $v_\ell\in W_0^{1,q}(\Omega_\ell)$ and 
$$E_{\Omega_\ell}(v_\ell)=E_{\mathcal{A}_\ell}(v_\ell) + E_{\Omega_\ell\backslash\mathcal{A}_\ell}(v_\ell) = E_{\mathcal{A}_\ell}(u_\infty) + E_{\Omega_\ell\backslash\mathcal{A}_\ell}(u_\ell)$$
$$<
E_{\mathcal{A}_\ell}(u_\ell) + E_{\Omega_\ell\backslash\mathcal{A}_\ell}(u_\ell)= E_{\Omega_\ell}(u_\ell)$$
and a contradiction with the definition of $u_\ell$. 
Setting then $w_\ell=u_\infty+(u_\ell-u_\infty)^+$ one has $w_\ell\in V^{1,q}(\Omega_\ell)$ and 
$$E_{\Omega_\ell}(w_\ell)=E_{\mathcal{A}_\ell}(w_\ell) + E_{\Omega_\ell\backslash\mathcal{A}_\ell}(w_\ell) = E_{\mathcal{A}_\ell}(u_\ell) + E_{\Omega_\ell\backslash\mathcal{A}_\ell}(u_\infty)\leq
E_{\Omega_\ell}(u_\infty).$$
Thus $w_\ell=u_\infty$ and $(u_\ell-u_\infty)^+=0$ which completes the proof.
\end{proof}
\smallskip

 Using similar argument as in the last theorem one can prove the following  monotonicity property of the 
solutions $u_\ell$. We emphasize that such a monotonicity property holds true for general domains, but we will present the result only for the  family $\Omega_\ell$.

\begin{lemma}\emph{[Monotonicity]}\label{ss}
If $f\geq 0$  and $\ell <  \ell'$ then $u_\ell \leq u_{\ell'}, \  a.e. $ in $\Omega_{\ell'}$, where $u_\ell$ is extended by $0$ outside $\Omega_{\ell'}$.
\end{lemma}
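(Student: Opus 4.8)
The statement to prove is \rlemma{ss}: if $f\geq 0$ and $\ell < \ell'$ then $u_\ell \leq u_{\ell'}$ a.e. in $\Omega_{\ell'}$, where $u_\ell$ is extended by $0$ outside $\Omega_\ell$. The natural plan is to mimic the truncation argument used for the pointwise estimate in \rlemma{poin}: exploit that both $u_\ell$ and $u_{\ell'}$ are minimizers of the same energy functional $E$ (with the same $f\geq 0$) over nested admissible sets, and construct competitors by cutting at the crossing set.

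\textbf{Plan.} First I would record the preliminary fact that $u_\ell \geq 0$ and $u_{\ell'}\geq 0$ a.e. (by the truncation $\max\{0,\cdot\}$ argument already used repeatedly, using $f\geq 0$ and uniqueness), so that the zero-extension of $u_\ell$ lies in $W_0^{1,q}(\Omega_{\ell'})$ and is nonnegative. Next, set $\mathcal{A} := \{X \in \Omega_{\ell'} \mid u_\ell(X) > u_{\ell'}(X)\}$ (with $u_\ell$ zero-extended); note that on $\Omega_{\ell'}\setminus\Omega_\ell$ one has $u_\ell = 0 \leq u_{\ell'}$, so $\mathcal{A}\subseteq\Omega_\ell$. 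The key claim, exactly parallel to \rlemma{poin}, is that
$$E_{\mathcal{A}}(u_\ell) \leq E_{\mathcal{A}}(u_{\ell'}).$$
To see this, suppose not; then $v := u_{\ell'} + (u_\ell - u_{\ell'})^+$ agrees with $u_{\ell'}$ off $\mathcal{A}$ and equals $u_\ell$ on $\mathcal{A}$, lies in $W_0^{1,q}(\Omega_{\ell'})$, and satisfies
$$E_{\Omega_{\ell'}}(v) = E_{\mathcal{A}}(u_\ell) + E_{\Omega_{\ell'}\setminus\mathcal{A}}(u_{\ell'}) < E_{\mathcal{A}}(u_{\ell'}) + E_{\Omega_{\ell'}\setminus\mathcal{A}}(u_{\ell'}) = E_{\Omega_{\ell'}}(u_{\ell'}),$$
contradicting minimality of $u_{\ell'}$. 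Then, going the other way, set $w := u_\ell - (u_\ell - u_{\ell'})^+$ (zero-extended $u_\ell$), so that $w = u_{\ell'}$ on $\mathcal{A}$ and $w = u_\ell$ off $\mathcal{A}$; since $\mathcal{A}\subseteq\Omega_\ell$ and $w$ vanishes on $\partial\Omega_\ell$, we have $w\in W_0^{1,q}(\Omega_\ell)$, and
$$E_{\Omega_\ell}(w) = E_{\mathcal{A}}(u_{\ell'}) + E_{\Omega_\ell\setminus\mathcal{A}}(u_\ell) \leq E_{\mathcal{A}}(u_\ell) + E_{\Omega_\ell\setminus\mathcal{A}}(u_\ell) = E_{\Omega_\ell}(u_\ell).$$
By uniqueness of the minimizer $u_\ell$ this forces $w = u_\ell$, hence $(u_\ell - u_{\ell'})^+ = 0$, i.e. $u_\ell \leq u_{\ell'}$ a.e. in $\Omega_{\ell'}$.

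\textbf{Main obstacle.} The genuinely delicate point — more a matter of care than difficulty — is verifying that the competitors $v$ and $w$ actually belong to the correct Sobolev classes and that the energy splits additively across $\mathcal{A}$ as written. For $w$ one must check that its zero-extension across $\partial\Omega_\ell$ is legitimate: this uses $u_\ell\in W_0^{1,q}(\Omega_\ell)$ together with $\mathcal{A}\subseteq\Omega_\ell$ (so the modification does not touch a neighborhood of $\partial\Omega_\ell$ where $u_\ell$ already vanishes), and the fact that $(u_\ell-u_{\ell'})^+\in W^{1,q}$ with $\nabla(u_\ell-u_{\ell'})^+ = \mathbf{1}_{\mathcal{A}}\nabla(u_\ell-u_{\ell'})$. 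The additivity of $E_{\Omega_{\ell'}}$ over the disjoint decomposition $\Omega_{\ell'} = \mathcal{A}\cup(\Omega_{\ell'}\setminus\mathcal{A})$ is immediate from the integral form of $E$. One should also note that the strict convexity of $F$ (hence uniqueness of both minimizers), already assumed throughout, is what allows the final step to conclude equality rather than merely $\leq$. Since this is entirely a repetition of the structure of \rlemma{poin}, I would state it compactly, referring back to that proof for the routine verifications.
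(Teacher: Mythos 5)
Your argument is correct in substance and is exactly what the paper intends: the paper gives no separate proof of Lemma~\ref{ss}, saying only that it follows by the argument of Lemma~\ref{poin}, and your two-competitor scheme (the competitor $v=u_{\ell'}+(u_\ell-u_{\ell'})^+\in W_0^{1,q}(\Omega_{\ell'})$ tested against the minimality of $u_{\ell'}$, then $w=u_\ell-(u_\ell-u_{\ell'})^+\in W_0^{1,q}(\Omega_\ell)$ tested against $u_\ell$ and uniqueness) is precisely that adaptation, in the same order the paper itself uses in the proof of Theorem~\ref{p > 1 }. One slip to correct in the prose: the key claim should read $E_{\mathcal{A}}(u_{\ell'})\leq E_{\mathcal{A}}(u_\ell)$ --- this is what your displayed contradiction with $v$ actually establishes (the strict inequality in that display is $E_{\mathcal{A}}(u_\ell)<E_{\mathcal{A}}(u_{\ell'})$, i.e.\ the negation of $E_{\mathcal{A}}(u_{\ell'})\leq E_{\mathcal{A}}(u_\ell)$, not of the claim as you state it), and it is also the inequality your second step needs when bounding $E_{\Omega_\ell}(w)$. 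Since the displayed chains are mutually consistent and correct, this is a labeling error rather than a mathematical gap; the routine Sobolev verifications you flag ($u_\ell,u_{\ell'}\geq 0$, $\mathcal{A}\subset\Omega_\ell$ up to a null set, $0\leq w\leq u_\ell$ so that $w\in W_0^{1,q}(\Omega_\ell)$) are at the same level of rigor as the paper's own proof of Lemma~\ref{poin}.
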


\begin{theorem}
\label{p > 1 }
Let $f \geq 0$, then for some function $\tilde{u}_\infty = \tilde{u}_\infty(X_2)$ it holds 
$$ u_\ell \rightarrow \tilde{u}_\infty(X_2)$$
pointwise.
\end{theorem}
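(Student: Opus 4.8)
The plan is to combine the two monotonicity facts already established — Lemma~\ref{poin} (the uniform bound $0\le u_\ell\le u_\infty$) and Lemma~\ref{ss} (monotonicity of $u_\ell$ in $\ell$) — to extract a pointwise limit, and then to identify that limit as a function of $X_2$ alone. First I would note that for each fixed $X\in\R^n$, once $\ell$ is large enough that $X\in\Omega_\ell$, the sequence $\{u_\ell(X)\}$ is nondecreasing in $\ell$ by Lemma~\ref{ss} and bounded above by $u_\infty(X_2)$ by Lemma~\ref{poin}. Hence the pointwise limit
$$
\tilde u(X):=\lim_{\ell\to\infty}u_\ell(X)
$$
exists for a.e.\ $X$, and $0\le\tilde u\le u_\infty$, so in particular $\tilde u\in L^q$ on every bounded subcylinder and, by monotone (or dominated) convergence, $u_\ell\to\tilde u$ in $L^q_{loc}$.

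The heart of the matter is showing $\tilde u$ does not depend on $X_1$. For this I would invoke the energy bound: since $E_{\Omega_\ell}(u_\ell)\le E_{\Omega_\ell}(0)=0$ together with \eqref{quadratic}, H\"older and Poincar\'e give, as in \eqref{we}, a bound $\int_{\Omega_\ell}|\grad u_\ell|^q\le C\ell^p$. But a sharper, $\ell$-free statement is available on fixed subcylinders: by Proposition~\ref{new} and Corollary~\ref{za svaki skup}, for any fixed box $(-a,a)^p\times\omega_2\subset\Omega_\ell$ one controls $\int_{(-a,a)^p\times\omega_2}|\grad u_\ell|^q$ by a constant independent of $\ell$ (covering the box by finitely many translates of the strips $D_{\ell_0}$, or rerunning the proof of Proposition~\ref{new} with $\Omega_\ell\setminus\Omega_{\ell_0}$ replaced by the appropriate outer region). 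Consequently $\{u_\ell\}$ is bounded in $W^{1,q}$ on each fixed subcylinder, so along a subsequence $u_\ell\rightharpoonup\tilde u$ weakly in $W^{1,q}_{loc}$; combined with the pointwise convergence this pins the weak limit to be $\tilde u$ itself, and no subsequence is needed. Now fix a subcylinder $Q_a:=(-a,a)^p\times\omega_2$ and shift it: for $\ell$ large, $u_\ell$ restricted to $Q_{a}$ and to a translate $Q_a+(h,0)$ (with $h\in\R^p$, $|h|$ small) are \emph{both} pieces of the same minimizer on the much larger cylinder. The key observation is that the $X_1$-gradient is asymptotically negligible: I would show $\int_{Q_a}|\grad_{X_1}u_\ell|^q\to 0$ as $\ell\to\infty$. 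Granting this, $\grad_{X_1}\tilde u=0$ on $Q_a$, hence (since $Q_a$ is connected in the $X_1$-variable and $a$ is arbitrary) $\tilde u$ is independent of $X_1$; writing $\tilde u(X)=\tilde u_\infty(X_2)$, and noting $\tilde u_\infty\in W^{1,q}_0(\omega_2)$ from the lateral boundary condition, finishes the statement.

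To see $\int_{Q_a}|\grad_{X_1}u_\ell|^q\to0$, I would argue by a slicing/comparison device in the $X_1$-directions analogous to the one-dimensional lemmas of Section~2. Since $u_\ell$ minimizes $E_{\Omega_\ell}$ and the competitor obtained by replacing $u_\ell(\cdot,X_2)$ on an $X_1$-interval by its value on a suitable level set lowers the energy unless $u_\ell$ is already flat there, the monotone behaviour of $u_\ell$ in each $X_1$-variable (established exactly as in Lemma~\ref{mo}, using $f\ge0$, or directly via Lemma~\ref{ss} and translation-invariance of the hypercube geometry) forces $u_\ell(\cdot,X_2)$ to be monotone away from a central region whose size grows with $\ell$. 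Then, for fixed $a$, the oscillation of $u_\ell$ in $X_1$ over $(-a,a)^p$ is squeezed between $u_\ell$ evaluated at two points that both converge to the same value of $\tilde u_\infty$, so the oscillation tends to $0$; interpolating this uniform-in-$X_2$ smallness of the oscillation against the $W^{1,q}$ bound on $Q_a$ yields $\grad_{X_1}u_\ell\to0$ in $L^q(Q_a)$. This last step — turning "flatness at $\pm\infty$" plus "bounded energy" into "vanishing $X_1$-gradient on fixed boxes" — is where the real work lies, and it is the step I expect to be the main obstacle, since for $p>1$ the monotonicity arguments of Section~2 do not transfer verbatim and must be replaced by a genuinely multidimensional truncation/comparison argument exploiting the product structure $(-\ell,\ell)^p\times\omega_2$.
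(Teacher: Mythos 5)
Your first step is fine and coincides with the paper's: by Lemma~\ref{ss} the sequence $\ell\mapsto u_\ell(X)$ is nondecreasing (after extension by $0$) and by Lemma~\ref{poin} it is bounded above by $u_\infty(X_2)$, so a pointwise limit $\tilde u$ exists. The genuine gap is in the second half, the independence of $X_1$, which is the actual content of the theorem and which your proposal does not establish. Concretely: (i) the oscillation-squeezing step is circular --- you bound the oscillation of $u_\ell$ in $X_1$ over a fixed box ``between $u_\ell$ evaluated at two points that both converge to the same value of $\tilde u_\infty$'', but the fact that the limits at two distinct $X_1$-points agree is exactly what has to be proved; (ii) the monotonicity in each $x_i$-variable ``as in Lemma~\ref{mo}'' is not available here: the Section~2 argument uses a strictly positive constant forcing $\gamma$ and a one-dimensional flattening competitor, whereas here $f=f(X_2)\geq 0$ may vanish and flattening a slice in $X_1$ also changes the $X_2$-part of the energy; you yourself flag this as the main obstacle, so the proof is incomplete at its crucial point; (iii) the final assertion that small oscillation plus a $W^{1,q}$ bound on $Q_a$ gives $\grad_{X_1}u_\ell\to 0$ strongly in $L^q(Q_a)$ is false as stated (think of $\varepsilon\sin(x_1/\varepsilon^2)$); one only gets weak convergence of the gradients --- which would in fact suffice to conclude $\grad_{X_1}\tilde u=0$, but only after (i)--(ii) are repaired, and in any case if the oscillation did tend to zero you would not need gradients at all.

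For comparison, the paper settles this step by a translation-comparison argument in the spirit of the comparisons you already invoked through Lemmas~\ref{poin} and~\ref{ss}: for $h$ and each $i\in\{1,\dots,p\}$ one shows $u_{\ell+h}\geq \tau^i_{\pm h}u_\ell$, where $\tau^i_h v=v(\cdot-he_i)$, by looking at the set $\mathcal{A}=\{\tau^i_h u_\ell>u_{\ell+h}\}$, comparing the energies of the competitors $u_{\ell+h}+(\tau^i_h u_\ell-u_{\ell+h})^+$ in $W^{1,q}_0(\Omega_{\ell+h})$ and $u_\ell-(u_\ell-\tau^i_{-h}u_{\ell+h})^+$ in $W^{1,q}_0(\Omega_\ell)$, and using the translation invariance of $F(\grad\cdot)-f\cdot$ in the $X_1$-directions on the hypercube together with uniqueness of the minimizers to force $(u_\ell-\tau^i_{-h}u_{\ell+h})^+=0$. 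Passing to the limit in $u_{\ell+h}\geq\tau^i_{\pm h}u_\ell$ yields $\tilde u_\infty(X)\geq \tilde u_\infty(X\mp he_i)$ for every $h$ and $i$, hence equality, i.e.\ $\tilde u_\infty$ is independent of $X_1$. If you wish to keep your outline, this comparison-of-minimizers on the bad set is the missing tool; the gradient-vanishing route would require a multidimensional argument that the proposal does not supply.
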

\begin{proof}
In the statement of the theorem it is understood that $u_\ell$ are extended by $0$ outside $\Omega_\ell$.  For $h \in \R$, we set
\begin{equation*}
\label{jj}
\tau_{h}^iv := v(X-he_i) \hspace*{3mm} i = 1, .  .  .  , p
\end{equation*}
where $e_i$ denotes the unit vector in $i$-th direction.  First we claim that 
\begin{equation}\label{kl}
u_{\ell+h} \geq \tau_{h}^iu_\ell.
\end{equation}
In order to prove \eqref{kl} one has to show that  $ (\tau_h^iu_\ell - u_{\ell+h})^+ = 0.$ Set
\begin{equation*}
\label{jk}
\mathcal{A} := \left\{ X \in \Omega_{\ell+h} \ \big| \  \tau_h^iu_\ell(X) > u_{\ell+h}(X)\right\}.
\end{equation*}
 We have then $E_{\mathcal{A}}(u_{\ell+h}) \leq E_{\mathcal{A}}(\tau_h^i u_\ell)$. Indeed if this is not true, then setting 
 $$v := u_{\ell+h} + (\tau_h^iu_\ell - u_{\ell+h})^+$$
one has $v \in W_0^{1,q}(\Omega_{\ell+h})$ and
\begin{multline*}
E_{\Omega_{\ell+h}}(v) = E_{\mathcal{A}}(\tau_h^iu_\ell) + E_{\Omega_{\ell+h} \setminus \mathcal{A}}(u_{\ell+h}) \\ <  E_{\mathcal{A}}(u_{\ell+h}) + E_{\Omega_{\ell+h} \setminus \mathcal{A}}(u_{\ell+h}) = E_{\Omega_{\ell+h}}(u_{\ell+h}) \end{multline*}
and a contradiction with the definition of $u_{\ell+h}$.
\smallskip

Define $\mathcal{A'} := \left\{ X \in \Omega_{\ell} \ \big| \  \tau_{-h}^i(u_{\ell+h})(X) < u_{\ell}(X)\right\}.$ We claim that 
\begin{equation}\label{cl}
\mathcal{A} =  \mathcal{A'} +he_i.
\end{equation}
Indeed for $X$ such that $x_i < -\ell+h$ one has $\tau_h^i u_\ell = 0$ that is 
$X$ does not belongs to $\mathcal{A}$ and for $x_i-h \geq -\ell,$
$$\tau_h^i(u_\ell(X)) > u_{\ell+h}(X) \Longleftrightarrow u_\ell(X-he_i) > u_{\ell+h}(X),$$
i.e.  $X -he_i \in \mathcal{A'}$  which proves \eqref{cl}.
\smallskip

We consider then $w = u_\ell - (u_\ell -\tau_{-h}^iu_{\ell+h})^+ \in 	W_0^{1,q}(\Omega_{\ell})$. Clearly the function vanishes when $u_\ell = 0$ since $u_{\ell +h  } \geq 0.$ Then one has 

\begin{eqnarray}
E_{\Omega_\ell}(w) &=&  E_{\Omega \setminus \mathcal{A'}}(u_\ell)+ E_{\mathcal{A'}}(\tau_{-h}^iu_{\ell+h})  \nonumber\\
&=& E_{\Omega \setminus \mathcal{A'}}(u_\ell) + \int_{\mathcal{A'}}F\left(\grad u_{\ell + h}(X+he_i)\right) - fu_{\ell+h}(X + he_i) \nonumber \\
&=& E_{\Omega \setminus \mathcal{A'}}(u_\ell) + \int_{\mathcal{A}}F\left(\grad u_{\ell + h}(X)\right) - fu_{\ell+h}(X) \nonumber \\
&=& E_{\mathcal{A}}(u_{\ell +h}) + E_{\Omega \setminus \mathcal{A'}}(u_\ell) \leq
E_{\mathcal{A}}(\tau_h^iu_{\ell}) + E_{\Omega \setminus \mathcal{A'}}(u_\ell) \nonumber\\
&=& E_{\mathcal{A'}}(u_{\ell}) + E_{\Omega \setminus \mathcal{A'}}(u_\ell) = E_{\Omega_\ell}(u_\ell)\nonumber.
\end{eqnarray}
By the definition and uniqueness of $u_\ell$ this implies that 
$$w = u_\ell \Longleftrightarrow (u_\ell -\tau_{-h}^iu_{\ell+h})^+ = 0$$ and $\mathcal{A'}, \mathcal{A}$ are of measure $0$.  This proves \eqref{kl}. With similar argument one can show that 
\begin{equation}
\label{jh}
u_{\ell+ h } \geq \tau_{-h}^iu_\ell.
\end{equation}
Since $u_\ell$ is a monotone increasing sequence of functions which are bounded above by $u_\infty (X_2)$ (from Lemma \ref{poin} and Lemma \ref{ss}), one has for some $\tilde{u}_\infty$,
$$u_\ell \rightarrow \tilde{u}_\infty \hspace*{3mm}$$
pointwise. From  \eqref{jh} it follows  that 
$$\tilde{u}_\infty(X) \geq \tilde{u}_\infty(X-he_i), \ 
\tilde{u}_\infty(X) \geq \tilde{u}_\infty(X+ he_i) \hspace*{3mm} \forall h, i \in \{ 1, . . . , p\} $$
and thus $\tilde{u}_\infty $ is independent of the variable $X_1.$
\end{proof}

 We would like to point out here that it is possible to show that  $\tilde{u}_\infty = u_\infty$. We refer to M. Chipot \cite{chipot}.\smallskip
 
 Now we are interested in asymptotic behavior of the sequence $ \frac{E_{\Omega_\ell}(u_\ell)}{|\ell\omega_1|}$ as $\ell \rightarrow \infty$. ($|~~|$ denotes the measure of a set). In particular we will prove the following theorem.
\begin{theorem}\emph{[Convergence of energy]}
\label{s}
One has for some constant $C > 0$ and sufficiently large $\ell$,  $$E_{\omega_2}(u_\infty) \leq \frac{E_{\Omega_\ell}(u_\ell)}{|\ell\omega_1|} \leq  E_{\omega_2}(u_\infty) + \frac{C}{\ell}$$
where $u_\ell$ and $ u_\infty$  as in \eqref{equaation} and \eqref{equaation1} respectively.
\end{theorem}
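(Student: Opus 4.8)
The plan is to prove the two inequalities separately, each by exhibiting an explicit competitor. \emph{For the lower bound}, observe that any $u\in W_{0}^{1,q}(\Omega_\ell)$ vanishes on all of $\del\Omega_\ell$: in particular on the lateral part $\ell\omega_1\times\del\omega_2$, and it is (trivially, being zero) constant on $\del(\ell\omega_1)\times\omega_2$, so $W_{0}^{1,q}(\Omega_\ell)\subset V^{1,q}(\Omega_\ell)$. Hence $E_{\Omega_\ell}(u_\ell)=\min_{W_{0}^{1,q}(\Omega_\ell)}E_{\Omega_\ell}\geq\min_{V^{1,q}(\Omega_\ell)}E_{\Omega_\ell}=E_{\Omega_\ell}(w_\ell)$, and by Theorem \ref{wl} the $V^{1,q}$-minimizer is $w_\ell(X_1,X_2)=u_\infty(X_2)$, whose gradient has no $X_1$-component, so $E_{\Omega_\ell}(w_\ell)=|\ell\omega_1|\int_{\omega_2}\big(F(0,\grad_{X_2}u_\infty)-fu_\infty\big)=|\ell\omega_1|\,E_{\omega_2}(u_\infty)$. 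This already gives $E_{\omega_2}(u_\infty)\leq E_{\Omega_\ell}(u_\ell)/|\ell\omega_1|$.

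\emph{For the upper bound} I would test \eqref{equaation} with a cut-off of the extended limit profile. Using that $\omega_1$ is star shaped with respect to the origin, fix for $\ell>1$ a Lipschitz function $\eta_\ell=\eta_\ell(X_1)$ with $0\leq\eta_\ell\leq1$, $\eta_\ell\equiv1$ on $(\ell-1)\omega_1$, $\eta_\ell\equiv0$ outside $\ell\omega_1$, and $|\grad_{X_1}\eta_\ell|\leq C$ with $C$ independent of $\ell$ (for instance $\eta_\ell$ affine in the Minkowski gauge of $\omega_1$ on the shell $\ell\omega_1\setminus(\ell-1)\omega_1$; star-shapedness is what yields the uniform gradient bound). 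Put $\phi_\ell(X_1,X_2):=\eta_\ell(X_1)u_\infty(X_2)$. Since $u_\infty\in W_{0}^{1,q}(\omega_2)$ and $\eta_\ell$ vanishes on $\del(\ell\omega_1)$, we get $\phi_\ell\in W_{0}^{1,q}(\Omega_\ell)$, hence $E_{\Omega_\ell}(u_\ell)\leq E_{\Omega_\ell}(\phi_\ell)$. Now split $E_{\Omega_\ell}(\phi_\ell)$ over $(\ell-1)\omega_1\times\omega_2$, where $\eta_\ell\equiv1$ so that $\grad\phi_\ell=(0,\grad_{X_2}u_\infty)$ and this part equals exactly $|(\ell-1)\omega_1|\,E_{\omega_2}(u_\infty)$, and over the shell $A_\ell:=\big(\ell\omega_1\setminus(\ell-1)\omega_1\big)\times\omega_2$. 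On $A_\ell$ one has $\grad\phi_\ell=\big(u_\infty\grad_{X_1}\eta_\ell,\ \eta_\ell\grad_{X_2}u_\infty\big)$, so by \eqref{quadratic}, $0\leq\eta_\ell\leq1$ and $|\grad_{X_1}\eta_\ell|\leq C$, the integrand $F(\grad\phi_\ell)-f\phi_\ell$ is dominated by a fixed $L^{1}(\omega_2)$ function of $X_2$ (built from $|u_\infty|^{q}$, $|\grad_{X_2}u_\infty|^{q}$ and $|f||u_\infty|$, finite since $u_\infty\in W_{0}^{1,q}(\omega_2)$ and $f\in L^{q'}(\omega_2)$); as $|A_\ell|=\big(\ell^{p}-(\ell-1)^{p}\big)|\omega_1|\leq p\,\ell^{p-1}|\omega_1|$, the shell contributes at most $C_1\ell^{p-1}$.

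Collecting the two pieces, and using $E_{\omega_2}(u_\infty)\leq E_{\omega_2}(0)=0$ (recall \eqref{quadratic} forces $F(0)=0$) to pass from $|(\ell-1)\omega_1|$ to $|\ell\omega_1|$ at the cost of an error bounded by $\big(\ell^{p}-(\ell-1)^{p}\big)|\omega_1|\,|E_{\omega_2}(u_\infty)|\leq C_2\ell^{p-1}$, I expect to arrive at $E_{\Omega_\ell}(u_\ell)\leq|\ell\omega_1|\,E_{\omega_2}(u_\infty)+C_3\ell^{p-1}$; dividing by $|\ell\omega_1|=\ell^{p}|\omega_1|$ then produces the right-hand inequality with $C=C_3/|\omega_1|$. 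I do not anticipate a genuine obstacle here: the only step that takes a moment is producing the cut-off $\eta_\ell$ with an $\ell$-independent Lipschitz constant on the annular shell $\ell\omega_1\setminus(\ell-1)\omega_1$ — which is precisely where star-shapedness of $\omega_1$ is used — while everything else is a routine splitting together with the growth estimate \eqref{quadratic}.
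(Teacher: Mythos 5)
Correct, and essentially the paper's own proof: your upper bound is exactly the paper's argument (test \eqref{equaation} with a uniform-Lipschitz cutoff times $u_\infty$, split off the shell $\big(\ell\omega_1\setminus(\ell-1)\omega_1\big)\times\omega_2$, and use \eqref{quadratic} plus $|\ell\omega_1|-|(\ell-1)\omega_1|\leq C\ell^{p-1}$), and your lower bound differs only in that you invoke Theorem \ref{wl} through the inclusion $W_0^{1,q}(\Omega_\ell)\subset V^{1,q}(\Omega_\ell)$ instead of re-running the averaging/Jensen computation on $u_\ell$ as the paper does — the same argument, repackaged. The one caveat, which you share with the paper, is the tacit assumption that the cutoff can be chosen with an $\ell$-independent Lipschitz constant on the shell; this is clear when $\omega_1$ is star-shaped with respect to a ball (or convex), but is not automatic from star-shapedness with respect to a single point, so your parenthetical Minkowski-gauge construction is slightly optimistic, though no more so than the paper itself.
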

\begin{proof}
Set
$$v_\ell(X_2) = \fint_{\ell\omega_1} u_\ell(.,X_2) = \frac{1}{|\ell\omega_1| } \int_{\ell\omega_1} u_\ell(.,X_2).$$
It is easy to see that $v_\ell \in W_0^{1,q}(\omega_2).$ Therefore
$$E_{\omega_2}(u_\infty) \leq E_{\omega_2}(v_\ell) = \int_{\omega_2} F\left(0,\grad_{X_2} v_\ell\right) -fv_\ell.$$
From divergence theorem, one has 
$$0 = \fint_{\ell\omega_1} \grad_{X_1} u_\ell$$
and by differentiation under the integral 
$$\grad_{X_2} v_\ell = \fint_{\ell\omega_1} \grad_{X_2} u_\ell.$$
Therefore we have by Jensen's inequality 
\begin{multline*}
E_{\omega_2}(u_\infty)\leq 
\int_{\omega_2} \left\{ F \left( \fint_{\ell\omega_1}\grad_{X_1} u_\ell , \ \fint_{\ell\omega_1}\grad_{X_2} u_\ell \right)  \right\}   - \int_{\omega_2} f \left\{ \fint_{\ell\omega_1}
u_\ell \right\}  \\
\leq  \int_{\omega_2}\fint_{\ell\omega_1} \left\{F(\grad u_\ell)-fu_\ell \right\} = \frac{E_{\Omega_\ell}(u_\ell)}{|\ell\omega_1|}.
\end{multline*}
This proves the first inequality. \smallskip

For the second one, first we consider a Lipschitz continuous function $\rho_\ell= \rho_\ell(X_1)$, such that $\rho_\ell = 1$ on $(\ell -1)\omega_1$ and $\rho_\ell = 0$ on $\del(\ell\omega_1)$. We also assume that there  exists a constant $C > 0 $ (independent of $\ell$) such that $$|\grad_{X_1} \rho_\ell| \leq C \ \textrm{and}   \ 0\leq \rho_\ell \leq 1.$$

\noi Thus $\rho_\ell u_\infty \in W_0^{1,q}(\Omega_\ell)$. Then from \eqref{equaation}, we have 

\begin{equation*}\label{ema}
E_{\Omega_\ell}(u_\ell) \leq E_{\Omega_\ell}(\rho_\ell u_\infty). \end{equation*}
We compute
\begin{multline*}
 E_{\Omega_\ell}(\rho_\ell u_\infty) = E_{\Omega_{\ell-1}}(u_\infty) + \int_{\Omega_\ell\setminus \Omega_{\ell-1}}F\big(\grad (\rho_\ell u_\infty) \big) -fu_\infty\rho_\ell \\ 
\leq E_{\Omega_{\ell}}(u_\infty) +  \int_{\Omega_\ell\setminus \Omega_{\ell-1}}F\big(\grad (\rho_\ell u_\infty) \big) -F\big(\grad  u_\infty \big)-fu_\infty(\rho_\ell-1)\\
 \leq  |\ell\omega_1|E_{\omega_2}(u_\infty) +  \int_{\Omega_\ell\setminus \Omega_{\ell-1}}\Lambda |\grad (\rho_\ell u_\infty) |^q + \Lambda |\grad  u_\infty |^q + |f | |u_\infty|\\
 \leq  |\ell\omega_1|E_{\omega_2}(u_\infty) +  C\int_{\Omega_\ell\setminus \Omega_{\ell-1}} |\grad  u_\infty |^q + | u_\infty |^q + |f |^{q'} \\
 \leq  |\ell\omega_1|E_{\omega_2}(u_\infty) +   C \{ |\ell\omega_1| - |(\ell-1)\omega_1|\} \int_{\omega_2} |\grad  u_\infty |^q +| u_\infty |^q + |f |^{q'}.
 \end{multline*}
Dividing  by $|\ell\omega_1|$ and the result follows, i.e. one has 
$$\frac{E_{\Omega_\ell}(u_\ell)}{|\ell\omega_1|} \leq  E_{\omega_2}(u_\infty) + \frac{C}{\ell}.$$

This finishes the proof of the theorem.
\end{proof}
\bigskip

\textbf{Acknowledgment}  The authors of this paper would like to thank the anonymous refree for his/her valuable suggetions in improving the quality of this work. The argument from \eqref{vv} till the end of the  proof of our main theorem is due to him which provided a better rate of convergence. The research leading to these results has received funding from 
 Lithuanian-Swiss cooperation programme to reduce economic and social 
 disparities within the enlarged European Union under project agreement 
 No CH-3-SMM-01/0.  The research of the first author was supported also by  the Swiss National
Science Foundation under the contracts $\#$ 200021-129807/1 and 200021-146620.
The research of the second author was supported 
 by the European Initial Training Network  FIRST under the grant
  agreement \# PITN-GA-2009-238702. The research of the third author is funded by ``Innovation in Science Pursuit for Inspired Research(INSPIRE)"
under the IVR Number: 20140000099.

\end{document}